\numberwithin{equation}{section}
\newcommand{\BOX}{\ensuremath\Box}
\newtheorem{theorem}{Theorem}[section]
\newtheorem{corollary}[theorem]{Corollary}
\newtheorem{proposition}[theorem]{Proposition}
{\theorembodyfont{\rmfamily}\newtheorem{remark}[theorem]{Remark}}
{\theorembodyfont{\rmfamily}}
\newcommand{\Z}{\mathbb{Z}}
\newcommand{\R}{\mathbb{R}}
\newcommand{\dd}{\,{\rm d}}
\newcommand{\opdiv}{\operatorname{div}}
\newcommand{\oprot}{\operatorname{rot}}
\newcommand{\h}{{\rm h}}
\newcommand{\vv}{{\rm v}}
\DeclareMathOperator*{\esssup}{ess\,sup}
\def\XXint#1#2#3{{\setbox0=\hbox{$#1{#2#3}{\int}$}
		\vcenter{\hbox{$#2#3$}}\kern-.5\wd0}}
\newenvironment{proof}{{\vskip\baselineskip\noindent\textbf{Proof:}}}%
{\hspace*{.1pt}\hspace*{\fill}\BOX\vskip\baselineskip}
\newenvironment{proofx}[1]%
{\vskip\baselineskip\noindent\textbf{Proof of {#1}:}}%
{\hspace*{.1pt}\hspace*{\fill}\BOX\vskip\baselineskip}
{\vskip\baselineskip\noindent\textbf{Proof of Theorem \protect\ref{#1}:}}%
{\hspace*{.1pt}\hspace*{\fill}\BOX\vskip\baselineskip}
{\vskip\baselineskip\noindent\textbf{Proof of Theorems \protect\ref{#1} --
		\protect\ref{#2}:}}%
{\hspace*{.1pt}\hspace*{\fill}\BOX\vskip\baselineskip}
\begin{document}

\title{Existence of steady Navier-Stokes flows \\ exterior to an infinite cylinder}

\author{
Mitsuo Higaki
\thanks{
Department of Mathematics, 
Graduate School of Science, 
Kobe University, 
1-1 Rokkodai, Nada-ku, Kobe 657-8501, Japan.
\textit{E-mail address:}\texttt{higaki@math.kobe-u.ac.jp}
}
\and
Ryoma Horiuchi
\thanks{
Department of Mathematics, 
Graduate School of Science, 
Kobe University, 
1-1 Rokkodai, Nada-ku, Kobe 657-8501, Japan.
\textit{E-mail address:}\texttt{ryoh0798@gmail.com}
}
}

\date{}

\maketitle

\begin{abstract}
We consider the three-dimensional steady Navier-Stokes system in the exterior of an infinite cylinder under the action of an external force. We construct solutions in the class of vertically uniform flows which vanish at horizontal infinity. More precisely, for a boundary datum determined by a rotating flow and a suction flow, and for a small force of the form $f=g+\opdiv F$ with suitable decay, we prove the existence of a weak solution asymptotic to the corresponding Hamel-type flow. Although all data are independent of the vertical variable, the problem is not reduced to the planar exterior Navier-Stokes system: the vertical component satisfies a separate transport-diffusion equation involving the two-dimensional Laplacian, whose fundamental solution has logarithmic growth. The proof is based on a mode-by-mode analysis of the linearized three-dimensional problem around the Hamel-type flow and a contraction argument.

\bigskip

\noindent \textbf{2020 Mathematics Subject Classification:} 
35Q30, 35B35, 76D05, 76D17 \\
\noindent \textbf{Key words:} 
Navier-Stokes system, steady problems, scale-critical decay 
\end{abstract}

\tableofcontents

\section{Introduction}
\label{sec.intro}

We consider the steady Navier-Stokes system on $\Omega\times\R$ 
\begin{equation}\tag{NS}\label{intro.eq.NS}
\left\{
\begin{array}{ll}
-\Delta u + \nabla p 
= -u\cdot\nabla u + f&\mbox{in}\ \Omega\times\R \\
\opdiv u = 0&\mbox{in}\ \Omega\times\R \\
u= b &\mbox{on}\ \partial\Omega\times\R \\
u(x,z)\to0&\mbox{as}\ |x|\to\infty. 
\end{array}\right.
\end{equation}
Here $\Omega=\{x=(x_1,x_2)\in\R^2~|~|x|>1\}$ denotes an exterior disk, and thus $\Omega\times\R$ is the exterior of an infinite cylinder. The velocity field $u=(u_1,u_2,u_3)$ and the pressure field $p$ are unknown functions, while the external force $f$ and the boundary condition $b$ are given. We use standard notation for derivatives: $\partial_j = \partial/\partial x_j$, $\partial_z = \partial/\partial z$, $\Delta = \sum_{j=1}^2 \partial^2_j + \partial_z^2$, $\nabla = (\partial_1, \partial_2, \partial_z)$, $\opdiv u = \sum_{j=1}^2 \partial_j u_j + \partial_z u_3$ and $u\cdot \nabla u = (\sum_{j=1}^2 u_j \partial_j + u_3\partial_z) u$.

Our interest is to construct a solution for $f$ that does not decay in the vertical direction. This configuration includes the planar exterior problem for the steady Navier-Stokes system if one takes $f=(f_1(x),f_2(x),0)$. It is well-known that this problem has characteristic difficulties due to the lack of certain embeddings in two-dimensional unbounded domains and to the Stokes paradox \cite{CF1961, KS1992, Galdi2004, Russo2010, Galdi2011}. There are many open problems although mathematically fundamental. For an overview, see the recent survey \cite{KR2023} by Korobkov-Ren.

Because of the absence of general theory, it is an important subject to construct planar steady Navier-Stokes flows for given data $\Omega,f,b$ belonging to an appropriate class, and to study their properties at infinity. There are two approaches in this context. One is based on symmetry. We impose symmetry on the given data to improve decay of solutions by cancellation in integrals; see \cite{Galdi2004, Russo2009, Galdi2011, Yamazaki2011, PR2012, Yamazaki2016, Yamazaki2018} for the work in this direction.

The other approach is more constructive and is based on perturbation. We perturb the Navier-Stokes system around an exact solution invariant under the scaling symmetry 
\[
u(x) \mapsto \lambda u(\lambda x), 
\quad 
p(x) \mapsto \lambda^2 p(\lambda x) 
\]
and construct a solution, regarded as a remainder at infinity, to the perturbed system. The underlying idea is that the transport by a scale-invariant flow improves decay structure of solutions to the perturbed system. We typically use an exact solution found by Hamel \cite{Hamel1917}: 
\[
{\mathcal V}(x) 
=  
\alpha \frac{x^{\perp}}{|x|^2} - \gamma \frac{x}{|x|^2}, 
\quad 
{\mathcal Q}(x) = -\frac{|{\mathcal V}(x)|^2}{2}, 
\quad 
\gamma,\alpha\in\R, 
\]
where $x^\bot=(-x_2,x_1)$ and we remark that ${\mathcal V}$ is a linear combination of the rotating flow $x^{\perp}/|x|^2$ and the flow $x/|x|^2$ carrying flux $-2\pi\gamma$. Indeed, solutions around $({\mathcal V}, {\mathcal Q})$ are constructed by Hillairet-Wittwer \cite{HW2013} for $|\alpha|>\sqrt{48}$ when $\gamma=0$ and by \cite{Higaki2023} for any $\alpha$ when $\gamma>2$. As related work, we refer to \cite{GHM2019} analyzing the exterior problem around the fast rotating flows and Maekawa-Tsurumi \cite{MT2023} constructing the Navier-Stokes flows in $\R^2$ around rotating flows. Besides, Guillod-Wittwer \cite{GW2015} generalizes the Hamel solutions.

In this paper, we use the perturbation method to construct solutions around $({\mathcal V}, {\mathcal Q})$ to the three-dimensional system \eqref{intro.eq.NS}. The content of the present work is not merely to repeat the two-dimensional argument in a different notation. There are at least two genuinely three-dimensional issues. First, since the equations are three-dimensional, the vorticity-streamfunction formulation useful in two-dimensional settings \cite{HW2013, Higaki2023, MT2023} cannot be applied directly. Second, even when the data are independent of the vertical variable, the vertical component is governed by a separate scalar transport-diffusion equation. Its linearization contains the two-dimensional Laplacian, whose fundamental solution has logarithmic growth. Thus the desired spatial decay of the vertical component is not an immediate consequence of the planar theory and has to be proved independently.

Let us also mention two developments that appeared after the first version of this preprint. Guo-Hillairet \cite{GH2024} extended the planar perturbative theory around Hamel-type flows and explicitly listed the present work among the three-dimensional extensions of the Hillairet-Wittwer approach \cite{HW2013}. Higaki \cite{Higaki2025} developed another three-dimensional extension by constructing axisymmetric steady flows around an infinite cylinder under suction, allowing neither periodicity nor decay in the vertical variable. These results are complementary to the present theorem. Here we treat general Fourier modes of vertically uniform data and allow an arbitrary rotation parameter $\alpha$, whereas the axisymmetric theory in \cite{Higaki2025} focuses on suction and requires additional restrictions when rotation is added; see \cite[Appendix B]{Higaki2025}.

We now introduce notation for the main result. For $\rho\ge0$, we define 
\begin{align}\label{def.L^infty_rho}
\begin{split}
L^\infty_\rho(\Omega) = \{ f\in L^\infty(\Omega)~|~ 
\|f\|_{L^\infty_\rho} 
<\infty \}, \qquad
\|f\|_{L^\infty_\rho} := \esssup_{x\in\Omega}\, |x|^\rho |f(x)|. 
\end{split}
\end{align}
In the cylindrical coordinates on $\Omega\times\R$ 
\begin{align}\label{def.cyl.coord.}
\begin{split}
&x_1 = r\cos \theta, 
\quad
x_2 = r\sin \theta, 
\quad 
r= |x| \ge 1, \quad \theta\in [0,2\pi),\\
&
{\bf e}_r = \Big(\frac{x}{|x|}, 0\Big), 
\quad 
{\bf e}_\theta = \Big(\frac{x^\bot}{|x|}, 0\Big), 
\quad 
{\bf e}_3 = (0,0,1), 
\end{split}
\end{align}
we denote the three-dimensional vector field $v=(v_1,v_2,v_3)$ by 
\[
v 
= v_r {\bf e}_r 
+ v_\theta {\bf e}_\theta 
+ v_3 {\bf e}_3,
\qquad
(v_r, v_\theta)
:=
\big(
(v_1,v_2)\cdot {\bf e}_r, 
(v_1,v_2)\cdot {\bf e}_\theta 
\big). 
\]
For a scalar function $s=s(r,\theta,z)$ on $\Omega\times\R$ and $n\in\Z$, we set 
\[
s_n(r,z) 
= \frac{1}{2\pi} \int_{0}^{2\pi} s(r,t,z) e^{-int} \dd t. 
\]
Then, for a vector field $v$ on $\Omega\times\R$, we define the operator ${\mathcal P}_n$ by 
\[
({\mathcal P}_n v)(r,\theta,z) 
= v_{r,n}(r,z) e^{in\theta} {\bf e}_r 
+ v_{\theta,n}(r,z) e^{in\theta} {\bf e}_\theta 
+ v_{3,n}(r,z) e^{in\theta} {\bf e}_3. 
\]
We also define ${\mathcal P}_n$ acting on scalar- and tensor-valued mappings in an obvious manner.

Our main result states the existence of solutions to \eqref{intro.eq.NS} which are vertically uniform. Before stating it, we record why the threshold $\gamma>2$ is natural in the present formulation. The restriction comes from the axisymmetric zero mode of the linearized horizontal problem; see Section \ref{subsubsec.axisym.horiz}. The corresponding homogeneous solutions are $r^{-1}$ and $r^{-\gamma+1}$. In Theorem \ref{thm.main}, the coefficients of the $O(r^{-1})$ terms at infinity are fixed in advance by the same parameters $\alpha,\gamma$ that appear in the boundary datum. Hence the homogeneous solution is only allowed to behave $O(r^{-\gamma+1})$ at spatial infinity and has to be absorbed into the remainder $O(r^{-\rho+1})$, which requires $\rho\le\gamma$. Since the space for the contraction is chosen with $\rho>2$, this leads to $\gamma>2$. In this sense the assumption is tied to the prescribed normalization of, in particular, the circulation at infinity. The planar theory \cite[Theorem 1]{GH2024} admits a wider parameter range once the circulation at infinity is allowed to vary, but for $\gamma\le2$ one should in general allow the circulation parameter at infinity to differ from the one prescribed on the boundary.

%
\begin{theorem}\label{thm.main}
For $\alpha\in\R$, $\gamma>2$ and $2<\rho<3$ with $\rho\le\gamma$, there is a constant $\varepsilon=\varepsilon(\alpha,\gamma,\rho)$ such that the following holds. Suppose that the boundary data $b$ is given by 
\[
b = b(x) = (\alpha x^{\bot} - \gamma x,0)
\]
and that the external force $f$ is a distribution on $\Omega$ and given by 
\[
f = g + \opdiv F 
\]
where $g\in L^\infty_{2\rho-1}(\Omega)^3$ and $F\in L^\infty_{2(\rho-1)}(\Omega)^{3\times3}$ satisfy 
\begin{align*}
\sum_{n\in\Z} 
\big(
\|\mathcal{P}_n g\|_{L^\infty_{2\rho-1}} 
+ 
\|\mathcal{P}_n F\|_{L^\infty_{2(\rho-1)}}
\big)
\le 
\varepsilon. 
\end{align*}
Then there is a weak solution $u=u(x)$ of \eqref{intro.eq.NS} unique in a suitable set (defined in the proof in Section \ref{sec.pf}). Moreover, the solution $u(x)$ has the asymptotic behavior 
\begin{align}\label{est1.thm.main} 
u(x) 
= 
\alpha \Big(\frac{x^{\bot}}{|x|^2},0\Big)
- \gamma \Big(\frac{x}{|x|^2}, 0\Big)
+ O(|x|^{-\rho+1})
\quad 
\mbox{as}\ |x|\to\infty. 
\end{align}
\end{theorem}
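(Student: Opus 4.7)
The plan is to construct $u$ as a perturbation of the Hamel flow. Setting $u = \mathcal{V} + v$ and $p = \mathcal{Q} + q$, with $(\mathcal{V},\mathcal{Q})$ extended trivially to three dimensions, and noting that $b = \mathcal{V}|_{\partial\Omega}$ since $|x|=1$ there, one checks that $(v,q)$ must satisfy
\begin{equation*}
-\Delta v + \mathcal{V}\cdot\nabla v + v\cdot\nabla\mathcal{V} + \nabla q = -v\cdot\nabla v + f, \qquad \opdiv v = 0,
\end{equation*}
with $v|_{\partial\Omega}=0$ and $v\to 0$ at infinity. Since the data are $z$-independent, I would seek $v = v(x)$; the system then splits naturally into a perturbed 2D Navier-Stokes system for the horizontal components $(v_1,v_2,q)$ and a scalar transport--diffusion equation for the passive scalar $v_3$, driven by the horizontal velocity and $f_3$.

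The rotational symmetry of $\mathcal{V}$, which sits in the zero angular mode, ensures that the operators $v\mapsto\mathcal{V}\cdot\nabla v$ and $v\mapsto v\cdot\nabla\mathcal{V}$ commute with every projector $\mathcal{P}_n$, so the linear part of the equation decouples across the decomposition $v = \sum_{n\in\Z}\mathcal{P}_n v$. The heart of the argument is a mode-by-mode linear theory: for each $n\in\Z$, solve the linearised perturbed system with source in weighted $L^\infty$ spaces, yielding an estimate of the form
\begin{equation*}
\|\mathcal{P}_n v\|_{L^\infty_{\rho-1}} \le C \big(\|\mathcal{P}_n g\|_{L^\infty_{2\rho-1}} + \|\mathcal{P}_n F\|_{L^\infty_{2(\rho-2)}}\big),
\end{equation*}
with $C$ independent of $n$. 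For $n=0$ the naive Stokes estimate fails by the Stokes paradox at the scale-critical decay $|x|^{-(\rho-1)}$, and the condition $\gamma > 2$ enters as the quantitative threshold that lets the inward radial transport by $-\gamma x/|x|^2$ restore coercivity, in the spirit of \cite{Higaki2023}. For $|n|\ge 1$ the extra angular dissipation makes the resolvent problem more benign, but uniformity of the constants in $n$ must be tracked carefully.

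Once the linear theory is in place, I would close the nonlinear problem by a contraction mapping in a ball of the type
\begin{equation*}
B_\varepsilon = \Big\{v \;\Big|\; \sum_{n\in\Z}\|\mathcal{P}_n v\|_{L^\infty_{\rho-1}}\le C\varepsilon\Big\},
\end{equation*}
combining the mode-wise linear estimate with bilinear bounds of the schematic form $\|v\cdot\nabla v\|_{L^\infty_{2\rho-1}} \lesssim \|v\|_{L^\infty_{\rho-1}}\|\nabla v\|_{L^\infty_{\rho}}$ and interior Schauder regularity to upgrade control of $v$ to control of $\nabla v$ away from the boundary. Summing over $n$ and using the $\ell^1(\Z)$ smallness of the data gives a contraction for $\varepsilon$ small, hence existence and uniqueness of $v$ in $B_\varepsilon$; the asymptotic expansion \eqref{est1.thm.main} is then immediate from $v\in L^\infty_{\rho-1}$, since $\rho-1 > 1$ guarantees that $v$ decays strictly faster than $\mathcal{V}$.

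The main obstacle is the linear theory for the axisymmetric mode $n=0$, where the Stokes paradox at scale-critical decay must be defeated by exploiting the precise transport structure of $\mathcal{V}$; the assumption $\gamma > 2$ is essentially sharp for this step, and producing weighted resolvent estimates adapted to the radial sink will require the bulk of the technical work. Secondary technical issues are the $n$-uniformity needed for the $\ell^1$-summation, and the need to produce matching weighted decay for the passive scalar $v_3$, which, although decoupled from the pressure, is nonlinearly coupled to the horizontal components and must inherit the same rate as $(v_1,v_2)$ for the sum over Fourier modes to close.
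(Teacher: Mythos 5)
Your proposal follows essentially the same route as the paper: perturb around the Hamel flow $V=(\mathcal{V},0)$, split the $z$-independent linearized system into a perturbed 2D Stokes-type system for the horizontal components and a scalar transport--diffusion equation for $v_3$, prove mode-wise weighted $L^\infty$ estimates with $n$-uniform constants, sum in $\ell^1$ over the Fourier modes, and close the nonlinearity by contraction in a small ball. The only substantive deviation is that the paper handles the quadratic term in divergence form, $\opdiv(w\otimes w)$ with $\|w\otimes w\|_{L^\infty_{2(\rho-1)}}\le \|w\|_{L^\infty_{\rho-1}}^2$, and extracts the gradient bounds directly from the explicit ODE representation formulas rather than from interior Schauder estimates, which would not apply here since $f=g+\opdiv F$ with $F$ merely in a weighted $L^\infty$ space.
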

%

%
\begin{remark}\label{rem.thm.main}
\begin{enumerate}[(i)]

\item\label{item1.rem.thm.main} 
The precise definition of weak solutions is given in Section \ref{subsec.sec.intro}.

\item\label{item2.rem.thm.main}
The weak formulation in Theorem \ref{thm.main} is dictated by the class of force $f=g+\opdiv F$. If $F$ is assumed only in a weighted $L^\infty$ class, then $\opdiv F$ is a distribution in general. Hence one should not expect $W^{2,2}_{{\rm loc}}$ regularity of the velocity without imposing local differentiability on $F$. Under stronger local assumptions on $g$ and $F$, standard elliptic regularity recovers the corresponding bootstrap regularity. In the special case $F=0$ and $g_3=0$, the horizontal part of Theorem \ref{thm.main} reduces to \cite[Theorem 1.1]{Higaki2023}. Indeed, in this setting the solution belongs to $W^{2,2}_{{\rm loc}}(\overline{\Omega})$ by elliptic regularity.

\item\label{item3.rem.thm.main}
Even though the given data $f,b$ are independent of the variable $z$, the solvability of \eqref{intro.eq.NS} cannot be reduced to that of the 2D Navier-Stokes system on $\Omega$. Indeed, for a smooth external force $f=f(x)$, the vertical component $u_3=u_3(x)$ is subject to 
\[
-(\partial_1^2 + \partial_2^2) u_3 
= -(u_1\partial_1+u_2\partial_2)u_3 + f_3. 
\]
After linearization, the 2D Laplace operator appears, and its fundamental solution has logarithmic growth. Thus the proof of the decay of $u_3$ requires a separate analysis. This is one of the genuinely three-dimensional points of the paper.

\item\label{item4.rem.thm.main} 
It is physically more natural to consider solutions of \eqref{intro.eq.NS} for given data that are neither vertically uniform nor decaying. However, the problem is difficult even for the vertically periodic data. Indeed, in addition to the analysis of the linearized problem in Section \ref{sec.lin.prob.}, we need to deal with the linear problems corresponding to the non-constant periodic data, which cannot be regarded as small perturbations from the Stokes system due to $\gamma>2$. New machinery is needed for this family of linear problems. Let us mention Kozono-Terasawa-Wakasugi \cite{KTW2023}, which provides asymptotic behavior for vertically periodic axisymmetric no-swirl steady flows under a generalized finite Dirichlet integral condition, and Higaki \cite{Higaki2025}, which constructs axisymmetric steady flows under suction without assuming either periodicity or decay in the vertical variable.

\end{enumerate}
\end{remark}
%

Let us outline the proof of Theorem \ref{thm.main} whose details will be given in Section \ref{sec.pf}. To adapt the above exact solution $({\mathcal V}, {\mathcal Q})$ to a three-dimensional system \eqref{intro.eq.NS}, we set
\begin{align}\label{def.V}
V(x) 
= ({\mathcal V}(x),0)
= \alpha \Big(\frac{x^{\bot}}{|x|^2},0\Big)
- \gamma \Big(\frac{x}{|x|^2}, 0\Big)
\end{align}
and $Q={\mathcal Q}$. Then the new pair $(v,q):=(u-V, p-Q)$ solves 
\begin{equation}\label{intro.eq.NP0}
\left\{
\begin{array}{ll}
-\Delta v 
+ v\cdot\nabla V 
+ V\cdot\nabla v 
+ \nabla q 
= 
- v\cdot\nabla v
+ f&\mbox{in}\ \Omega\times\R \\
\opdiv  v = 0&\mbox{in}\ \Omega\times\R \\
v= 0 &\mbox{on}\ \partial\Omega\times\R \\
v(x,z)\to0&\mbox{as}\ |x|\to\infty. 
\end{array}\right.
\end{equation}
By the formula 
\begin{align}\label{intro.eq.bilinear.rot}
u\cdot\nabla v + v\cdot\nabla u 
= -u\times\oprot v - v\times\oprot u 
+ \nabla\Big(
\frac{|u+v|^2-|u|^2-|v|^2}2
\Big), 
\end{align}
where $\times$ is the cross product in $\R^3$, and by $\oprot V=0$, we rewrite \eqref{intro.eq.NP0} as 
\begin{equation}\tag{NP}\label{intro.eq.NP}
\left\{
\begin{array}{ll}
-\Delta v -V\times\oprot v + \nabla q_1 
= 
- v\cdot\nabla v 
+ f&\mbox{in}\ \Omega\times\R \\
\opdiv   v = 0&\mbox{in}\ \Omega\times\R \\
v= 0 &\mbox{on}\ \partial\Omega\times\R \\
v(x,z)\to0&\mbox{as}\ |x|\to\infty 
\end{array}\right.
\end{equation}
with the pressure
\[
\nabla q_1 
= \nabla 
\Big(
q + \frac{|V+v|^2-|V|^2-|v|^2}2
\Big). 
\]

We analyze the perturbed nonlinear system \eqref{intro.eq.NP} to prove Theorem \ref{thm.main}. As will be seen in Section \ref{sec.lin.prob.}, the fundamental solution for the linearized problem has better decay compared with the one for the non-perturbed case when $\alpha=\gamma=0$. Consequently, an improvement of decay can be seen similar to the two-dimensional case \cite{HW2013, Higaki2023, MT2023}. However, a new analysis is necessary because the system \eqref{intro.eq.NP} is three-dimensional, and in particular because the vertical component has to be controlled separately.

This paper is organized as follows. In Section \ref{sec.lin.prob.}, we study the linearized problem of \eqref{intro.eq.NP} and prove decay estimates of the solutions. In Section \ref{sec.pf}, we prove Theorem \ref{thm.main}.

\subsection*{Notation and Terminology}
\label{subsec.sec.intro}

We summarize the notation and terminology used throughout this paper.

\smallskip

\noindent
{\bf Notation.} We denote by $C$ the constant and by $C(a,b,c,\ldots)$ the constant depending on $a,b,c,\ldots$. Both of these may vary from line to line. We use the function spaces on $\Omega$
\begin{align*}
\widehat{W}^{1,2}(\Omega)
&=\{\chi\in L^2_{{\rm loc}}(\overline{\Omega})~|~\nabla \chi\in L^2(\Omega)^2\}, \\
C^\infty_{0,\sigma}(\Omega)
&=\{\psi=(\psi_1, \psi_2)\in C^\infty_0(\Omega)^2~|~\partial_1 \psi_1 + \partial_2 \psi_2=0\},
\end{align*}
and $L^2_{\sigma}(\Omega)$ which is the completion of $C^\infty_{0,\sigma}(\Omega)$ in the $L^2$-norm. If there is no confusion, we use the same notation to denote the quantities concerning scalar-, vector- or tensor-valued mappings. For example, $\langle \cdot, \cdot\rangle$ denotes the inner product on $L^2(\Omega)$, $L^2(\Omega)^2$ or $L^2(\Omega)^{2\times2}$.

\smallskip

\noindent
\underline{{\it Weak solutions}}. 
Let us clarify the definition of weak solutions of \eqref{intro.eq.NS} 
in Theorem \ref{thm.main}. Let $f$ and $b$ satisfy the assumption in Theorem \ref{thm.main}. Then a three-dimensional vector field $u\in \widehat{W}^{1,2}(\Omega)^3$ is called a weak solution of \eqref{intro.eq.NS} if $u$ satisfies $\opdiv u=\partial_1 u_1 + \partial_2 u_2=0$ in the sense of distributions, $(u-b)|_{\partial\Omega}=0$ in the sense of trace, and it holds that 
\begin{align*}
\int_\Omega \nabla u \cdot \nabla \varphi 
= \int_\Omega (u\otimes u) \cdot \nabla \varphi 
+ \int_\Omega g \cdot \varphi 
- \int_\Omega F \cdot \nabla \varphi, &\\
\text{
for any
$\varphi
=(\varphi_1, \varphi_2, \varphi_3)
\in C^{\infty}_{0,\sigma}(\Omega)\times C^{\infty}_0(\Omega)$.}&
\end{align*}

\smallskip

\noindent
\underline{{\it Axisymmetricity}}. 
A scalar function on $\Omega\times\R$ with variable $(r,\theta,z)$ is said to be axisymmetric if it is independent of $\theta$. A vector field 
$v 
= v_r {\bf e}_r 
+ v_\theta {\bf e}_\theta 
+ v_3 {\bf e}_3$ 
on $\Omega\times\R$ with variable $(r,\theta,z)$ is said to be axisymmetric if the scalar functions $v_r, v_\theta, v_3$ are axisymmetric. The axisymmetricity of tensor fields on $\Omega\times\R$ is defined in a similar manner.

\section{Linearized Problem}
\label{sec.lin.prob.}

In this section, we consider the linearized problem of \eqref{intro.eq.NP}
\begin{equation}\tag{LP}\label{eq.LP}
\left\{
\begin{array}{ll}
-\Delta v -V\times\oprot v + \nabla q = f&\mbox{in}\ \Omega\times\R \\
\opdiv v = 0&\mbox{in}\ \Omega\times\R \\
v= 0 &\mbox{on}\ \partial\Omega\times\R \\
v(x,z)\to0&\mbox{as}\ |x|\to\infty.
\end{array}\right.
\end{equation}
Assume that this system does not depend on the variable $z$. Set 
\begin{align}\label{def.op.h.1}
v_\h=(v_1,v_2), 
\qquad 
\oprot_{{\rm 2D}} v_\h 
= \partial_1 v_2 - \partial_2 v_1. 
\end{align}
Then, by direct computation 
\begin{align*}
-V\times\oprot v 
&= 
-\left(
\begin{array}{c}
V_1 \\
V_2  \\
0 
\end{array}
\right) 
\times 
\left(
\begin{array}{c}
\partial_2 v_3 \\
-\partial_1 v_3 \\
\oprot_{{\rm 2D}} v_\h 
\end{array}
\right) \\
&= 
\left(
\begin{array}{c}
-V_2  \\
V_1  \\
0 
\end{array}
\right) 
\oprot_{{\rm 2D}} v_\h
+ \left(
\begin{array}{c}
0 \\
0 \\
V\cdot\nabla v_3
\end{array}
\right), 
\end{align*}
we separate \eqref{eq.LP} into two systems; one is for the horizontal components $v_\h$ 
\begin{equation}\tag{LP$_\h$}\label{eq.LPh}
\left\{
\begin{array}{ll}
-\Delta_\h v_\h + (V_\h)^\bot \oprot_{{\rm 2D}} v_\h + \nabla_\h q 
= f_\h&\mbox{in}\ \Omega \\
\opdiv_\h v_\h = 0&\mbox{in}\ \Omega \\
v_\h = 0 &\mbox{on}\ \partial\Omega \\
v_\h(x)\to0&\mbox{as}\ |x|\to\infty 
\end{array}\right.
\end{equation}
where 
\begin{align}\label{def.op.h.2}
-\Delta_\h=-(\partial_1^2+\partial_2^2), 
\qquad
\nabla_\h=(\partial_1,\partial_2), 
\qquad
\opdiv_\h v_\h = \partial_1 v_1 + \partial_2 v_2, 
\end{align}
and the other is for the vertical component $v_3$
\begin{equation}\tag{LP$_\vv$}\label{eq.LPv}
\left\{
\begin{array}{ll}
- \Delta_\h v_3 
+ V_\h\cdot\nabla_\h v_3
= f_3&\mbox{in}\ \Omega \\ 
v_3 = 0 &\mbox{on}\ \partial\Omega \\ 
v_3(x)\to0&\mbox{as}\ |x|\to\infty. 
\end{array}\right. 
\end{equation}

Notice that both problems are imposed on the 2D exterior domain $\Omega$. The horizontal problem \eqref{eq.LPh} is closely related to the planar exterior problem studied in \cite{Higaki2023}. Here we revisit the estimates in a weak formulation suited to forces of the form $\opdiv F$ with $F$ only in weighted $L^\infty$ spaces. On the other hand, \eqref{eq.LPv} for the vertical component is not covered by the planar theory and requires a separate consideration.

\subsection{Preliminaries}
\label{subsec.prelim}

This subsection collects notation for mappings on $\Omega\times\R$ independent of the variable $z$. We denote by $(r,\theta)$ the variables in the polar coordinates on $\Omega$; see \eqref{def.cyl.coord.}. Moreover, we identify ${\bf e}_r$ and ${\bf e}_\theta$ respectively with the two-dimensional vectors $x/|x|$ and $x^\bot/|x|$ defined on $\Omega$ if there is no confusion. Some of the notation in this subsection are special cases of those in the introduction, but are duplicated for clarity of explanation.

In the polar coordinates, we denote a two-dimensional vector field $v_\h=(v_1,v_2)$ by 
\[
v_\h 
= v_r {\bf e}_r 
+ v_\theta {\bf e}_\theta, 
\qquad
(v_r, v_\theta)
:=
\big(
(v_1,v_2)\cdot {\bf e}_r, 
(v_1,v_2)\cdot {\bf e}_\theta 
\big). 
\]
Then the operators 
$\opdiv_\h$, $\oprot_{{\rm 2D}}$ and $-\Delta_\h$ in \eqref{def.op.h.1}--\eqref{def.op.h.2} are represented by 
\begin{align}\label{formulas.polar}
\begin{split}
\opdiv_\h v_\h
& = \frac1r \partial_r (r v_r) + \frac1r \partial_\theta v_\theta, \\
\oprot_{{\rm 2D}} v_\h 
& = \frac1r \partial_r (r v_\theta) - \frac1r \partial_\theta v_r,  \\
-\Delta_\h v_\h 
& = \Big\{ -\partial_r \Big( \frac1r \partial_r (r v_r ) \Big)  
- \frac{1}{r^2} \partial_\theta^2 v_r 
+ \frac{2}{r^2} \partial_\theta v_\theta \Big\} {\bf e}_r \\
&\quad
+  \Big\{ - \partial_r \Big( \frac1r \partial_r (r v_\theta) \Big) 
- \frac{1}{r^2} \partial_\theta^2 v_\theta 
- \frac{2}{r^2} \partial_\theta v_r \Big\} {\bf e}_\theta.
\end{split}
\end{align}
If a $3\times3$ tensor field $F$ on $\Omega\times\R$ 
\begin{align*}
F 
&= 
F_{r r} {\bf e}_r\otimes {\bf e}_r 
+ F_{r \theta} {\bf e}_r\otimes {\bf e}_\theta
+ F_{r 3} {\bf e}_r\otimes {\bf e}_3 \\
&\quad
+ F_{\theta r} {\bf e}_\theta\otimes {\bf e}_r 
+ F_{\theta \theta} {\bf e}_\theta\otimes {\bf e}_\theta 
+ F_{\theta 3} {\bf e}_\theta\otimes {\bf e}_3 \\
&\quad
+ F_{3 r} {\bf e}_3\otimes {\bf e}_r 
+ F_{3 \theta} {\bf e}_3\otimes {\bf e}_\theta 
+ F_{3 3} {\bf e}_3\otimes {\bf e}_3 
\end{align*}
is independent of $z$, then we have 
\begin{align}\label{def.divF}
\opdiv  F 
= (\opdiv  F)_r {\bf e}_r 
+ (\opdiv  F)_\theta {\bf e}_\theta 
+ (\opdiv  F)_3 {\bf e}_3, 
\end{align}
where
\begin{align*}
(\opdiv  F)_r 
&= 
\frac{1}{r} \partial_r (r F_{r r}) 
+ \frac{1}{r} (\partial_\theta F_{\theta r} - F_{\theta \theta}), \\
(\opdiv  F)_\theta 
&= 
\frac{1}{r} \partial_r (r F_{r \theta}) 
+ \frac{1}{r} (\partial_\theta F_{\theta \theta} + F_{\theta r}), \\
(\opdiv  F)_3 
&= 
\frac{1}{r} \partial_r (r F_{r 3}) 
+ \frac{1}{r} \partial_\theta F_{\theta 3}. 
\end{align*}
In particular, using $\opdiv_\h$ in \eqref{def.op.h.2}, we see that 
\begin{align}\label{rep.divF.3}
(\opdiv F)_3
= \opdiv_\h (F_{r 3} {\bf e}_r + F_{\theta 3} {\bf e}_\theta). 
\end{align}
For a $2\times2$ tensor field $F$ on $\Omega$ 
\begin{align*}
F 
= 
F_{r r} {\bf e}_r\otimes {\bf e}_r 
+ F_{r \theta} {\bf e}_r\otimes {\bf e}_\theta
+ F_{\theta r} {\bf e}_\theta\otimes {\bf e}_r 
+ F_{\theta \theta} {\bf e}_\theta\otimes {\bf e}_\theta, 
\end{align*}
we have 
\begin{align*}
\opdiv_\h F 
= (\opdiv_\h  F)_r {\bf e}_r 
+ (\opdiv_\h  F)_\theta {\bf e}_\theta, 
\end{align*}
where
\begin{align*}
\begin{split}
(\opdiv_\h F)_r 
&= 
\frac{1}{r} \partial_r (r F_{r r}) 
+ \frac{1}{r} (\partial_\theta F_{\theta r} - F_{\theta \theta}), \\
(\opdiv_\h F)_\theta 
&= 
\frac{1}{r} \partial_r (r F_{r \theta}) 
+ \frac{1}{r} (\partial_\theta F_{\theta \theta} + F_{\theta r}). 
\end{split}
\end{align*}

Let $n\in\Z$. For a scalar function $s=s(r,\theta)$ on $\Omega$, we define the projection 
\begin{align}\label{def.proj}
s_n(r) 
= \frac{1}{2\pi} \int_{0}^{2\pi} s(r,t) e^{-int} \dd t 
\end{align}
on the Fourier mode $n$. For a vector field $v=v(r,\theta)$ on $\Omega$ 
\[
v(r,\theta) 
= v_r(r,\theta) {\bf e}_r 
+ v_\theta(r,\theta) {\bf e}_\theta 
+ v_3(r,\theta) {\bf e}_3, 
\]
using \eqref{def.proj}, we define the operator ${\mathcal P}_n$ by 
\begin{align}\label{def.P}
({\mathcal P}_n v)(r,\theta) 
= v_{r,n}(r) e^{in\theta} {\bf e}_r 
+ v_{\theta,n}(r) e^{in\theta} {\bf e}_\theta 
+ v_{3,n}(r) e^{in\theta} {\bf e}_3. 
\end{align}
Applying the operator ${\mathcal P}_n$ to \eqref{def.divF}, we obtain 
\begin{align*}
{\mathcal P}_n (\opdiv  F) 
= 
(\opdiv  F)_{r,n}(r) e^{i n \theta} {\bf e}_r 
+ (\opdiv  F)_{\theta,n}(r) e^{i n \theta} {\bf e}_\theta 
+ (\opdiv  F)_{3,n}(r) e^{i n \theta} {\bf e}_3, 
\end{align*}
where 
\begin{align*}
(\opdiv F)_{r,n}
&= 
\frac{1}{r} \frac{\dd}{\dd r} (r F_{r r, n}) 
+ \frac{1}{r} 
(in F_{\theta r, n} - F_{\theta \theta, n}), \\
(\opdiv  F)_{\theta,n}
&= 
\frac{1}{r} \frac{\dd}{\dd r} (r F_{r \theta, n}) 
+ \frac{1}{r} 
(in F_{\theta \theta, n} + F_{\theta r, n}), \\
(\opdiv  F)_{3,n}
&= 
\frac{1}{r} \frac{\dd}{\dd r} (r F_{r 3, n}) 
+ \frac{in}{r} F_{\theta 3, n}. 
\end{align*}

We also define ${\mathcal P}_n$ acting on scalar- and tensor-valued mappings in an obvious manner. For vector-valued $f$ or tensor-valued $F$, we simply denote ${\mathcal P}_n f$ by $f_n$ or ${\mathcal P}_n F$ by $F_n$. We do not identify ${\mathcal P}_n s$ with $s_n$ when $s$ is scalar-valued to avoid confusion with \eqref{def.proj}.

\subsection{Horizontal Components}
\label{subsec.horiz}

Let $n\in\Z$. Applying ${\mathcal P}_n$ to $v_\h$ and $f_\h$
\begin{align*}
({\mathcal P}_n v_\h)(r,\theta)
&= v_{r,n}(r) e^{in\theta} {\bf e}_r 
+ v_{\theta,n}(r) e^{in\theta} {\bf e}_\theta, \\
({\mathcal P}_n f_\h)(r,\theta)
&= f_{r,n}(r) e^{in\theta} {\bf e}_r 
+ f_{\theta,n}(r) e^{in\theta} {\bf e}_\theta 
\end{align*}
and inserting these to \eqref{eq.LPh}, we see that $(v_{r,n}(r),v_{\theta,n}(r))$ and $q_n(r)$ satisfy 
\begin{align}
&\begin{aligned}\label{eq.LPh.polar.vr}
&-\frac{\dd}{\dd r} \Big(\frac1r\frac{\dd}{\dd r} (r v_{r,n})\Big)
+ \frac{n^2}{r^2} v_{r,n}
+ \frac{2in}{r^2} v_{\theta,n} \\
&\qquad\qquad
- \frac{\alpha}{r^2} \Big(\frac{\dd}{\dd r} (r v_{\theta,n}) - in v_{r,n}\Big)
+ \partial_r q_n 
= f_{r,n}, \quad r>1, \\
\end{aligned} \\
&\begin{aligned}\label{eq.LPh.polar.vtheta}
&-\frac{\dd}{\dd r} \Big(\frac1r\frac{\dd}{\dd r} (r v_{\theta,n})\Big) 
+ \frac{n^2}{r^2} v_{\theta,n}
- \frac{2in}{r^2} v_{r,n} \\
&\qquad\qquad
- \frac{\gamma}{r^2} \Big(\frac{\dd}{\dd r} (r v_{\theta,n}) - in v_{r,n}\Big)
+ \frac{i n}{r} q_n 
= f_{\theta,n}, \quad r>1, 
\end{aligned}
\end{align}
the divergence-free and boundary conditions 
\begin{align}\label{eq.LPh.polar.div-free.bdry}
\frac{\dd}{\dd r}(r v_{r,n}) + in v_{\theta,n}=0, 
\qquad v_{r,n}(1)=v_{\theta,n}(1)=0, 
\end{align}
and the condition at infinity
\begin{align}\label{eq.LPh.polar.spatial.infinity}
|v_{r,n}(r)| + |v_{\theta,n}(r)| \to 0, \quad r\to\infty. 
\end{align}
%

\subsubsection{Axisymmetric Part}
\label{subsubsec.axisym.horiz}

%
\begin{proposition}\label{prop.LPh.zero}
Let $\alpha\in\R$, $\gamma>2$ and $2<\rho\le\gamma$. Suppose that $f_\h=f_{\h,0}$ is given by $f_{\h,0}=\opdiv_\h F_0$ for some $F_0\in {\mathcal P}_0 L^\infty_{2(\rho-1)}(\Omega)^{2\times2}$. Then there is a unique weak solution $v_{\h,0}\in {\mathcal P}_0 L^2_\sigma(\Omega) \cap W^{1,2}_0(\Omega)^2$ of \eqref{eq.LPh} satisfying 
\[
v_{\h,0}(r,\theta)=v_{\theta,0}(r) {\bf e}_\theta
\]
and 
\begin{align}\label{est1.prop.LPh.zero}
\|v_{\h,0}\|_{L^\infty_{\rho-1}} 
+ \frac{1}{\gamma-1} 
\|\nabla_\h v_{\h,0}\|_{L^\infty_{\rho}} 
&\le 
\frac{C(\gamma-1)}{(\gamma-2)(\rho-2)} 
\|F_0\|_{L^\infty_{2(\rho-1)}}. 
\end{align}
The constant $C$ is independent of $\alpha$, $\gamma$ and $\rho$.
\end{proposition}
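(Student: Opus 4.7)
The plan is to reduce the axisymmetric problem to a first-order ODE in the vorticity, solve it explicitly by variation of parameters, and fix the single remaining constant of integration by the decay condition at infinity. Applying the projection $\mathcal{P}_0$ to \eqref{eq.LPh.polar.vr}--\eqref{eq.LPh.polar.div-free.bdry}, the axisymmetric divergence-free condition $(rv_{r,0})'=0$ together with $v_{r,0}(1)=0$ forces $v_{r,0}\equiv 0$, confirming the structure $v_{\h,0}=v_{\theta,0}(r){\bf e}_\theta$; the $\alpha$-dependent term then appears only in the $r$-equation, which merely determines the pressure. Introducing the vorticity $\omega(r):=\frac1r(rv_{\theta,0})'$, the angular equation reduces to $(r^\gamma\omega)'=-r^\gamma f_{\theta,0}$. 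Using the identity $f_{\theta,0}=(\opdiv_\h F_0)_\theta=\frac1r(rF_{r\theta,0})'+\frac1r F_{\theta r,0}$ and one integration by parts on $[1,r]$ yields
\[
r^\gamma\bigl(\omega(r)+F_{r\theta,0}(r)\bigr)=K+\int_1^r s^{\gamma-1}\bigl((\gamma-1)F_{r\theta,0}(s)-F_{\theta r,0}(s)\bigr)\dd s,
\]
with a single free constant $K$; the velocity is then recovered by $v_{\theta,0}(r)=\frac1r\int_1^r s\omega(s)\dd s$, in view of $v_{\theta,0}(1)=0$.

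The constant $K$ is fixed by the decay requirement. A Fubini rearrangement of the iterated integrals shows that the leading $r^{-1}$ contribution to $v_{\theta,0}(r)$ has coefficient proportional to $K+\int_1^\infty sF_{r\theta,0}\dd s-\int_1^\infty sF_{\theta r,0}\dd s$, both infinite integrals converging absolutely thanks to $\rho>2$. Since the target rate $O(r^{-(\rho-1)})$ with $\rho>2$ is strictly faster than $r^{-1}$, this coefficient must vanish, which determines $K$ uniquely and yields $|K|\le C\|F_0\|_{L^\infty_{2(\rho-1)}}/(\rho-2)$ --- the source of the $(\rho-2)^{-1}$ factor in the final bound. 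With $K$ fixed, one writes $v_{\theta,0}(r)=-\frac1r\int_r^\infty s\omega(s)\dd s$ and bounds each contribution using $|F_0(x)|\le\|F_0\|_{L^\infty_{2(\rho-1)}}|x|^{-2(\rho-1)}$: the $(\gamma-2)^{-1}$ factor arises from $\int_r^\infty t^{1-\gamma}\dd t=r^{2-\gamma}/(\gamma-2)$, an additional $(\rho-2)^{-1}$ from $\int_r^\infty t^{3-2\rho}\dd t=r^{4-2\rho}/(2(\rho-2))$, and the hypothesis $\rho\le\gamma$ ensures that the homogeneous-mode contribution $K/r^\gamma$ is dominated by $Cr^{-\rho}$. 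The gradient bound on $\nabla_\h v_{\h,0}$ then follows from $\partial_r v_{\theta,0}=\omega-v_{\theta,0}/r$ combined with analogous estimates on $\omega$, the $(\gamma-1)^{-1}$ prefactor on the gradient seminorm in \eqref{est1.prop.LPh.zero} matching the $(\gamma-1)$ weight on $F_{r\theta,0}$ inside the vorticity representation. Existence in $L^2_\sigma\cap W^{1,2}_0$ is then automatic since $r^{-(\rho-1)}$ is square-integrable on the 2D exterior for $\rho>2$, and uniqueness in the axisymmetric class follows by applying the same ODE analysis to the difference of two solutions: the only homogeneous solution $\omega=K'/r^\gamma$ satisfying $v_{\theta,0}(1)=0$ and decaying at infinity faster than $r^{-1}$ has $K'=0$.

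The principal obstacle is the choice of $K$. Since the hypothesis $\rho\le\gamma$ permits $\gamma>2\rho-2$, the naive approach of integrating $(r^\gamma\omega)'=-r^\gamma f_{\theta,0}$ directly from infinity is not available in general --- formally $|r^\gamma f_{\theta,0}|\lesssim r^{\gamma-2\rho+1}$, which is integrable at infinity only under $\gamma<2\rho-2$, a condition that fails in the upper range of $\gamma$. One must therefore integrate from $r=1$, retain $K$ as a free parameter, and pin it down \emph{a posteriori} by the asymptotic requirement $v_{\theta,0}\in o(r^{-1})$; the Fubini rearrangements described above and the absolute convergence $\int_1^\infty s|F_0|\dd s\lesssim\|F_0\|_{L^\infty_{2(\rho-1)}}/(\rho-2)$, both predicated on $\rho>2$, are the essential ingredients for carrying this out with the stated constants.
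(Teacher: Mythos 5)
Your proposal is correct and follows essentially the same route as the paper: both reduce to the explicit angular ODE, integrate by parts to move the derivative off $F_0$, and run the same weighted estimates with $\rho>2$ supplying the $(\rho-2)^{-1}$ factors and $\rho\le\gamma$ controlling the homogeneous mode; your derivation via the first-order vorticity equation $(r^\gamma\omega)'=-r^\gamma f_{\theta,0}$ with the constant $K$ fixed a posteriori by the decay condition reproduces exactly the paper's representation \eqref{rep1.proof.prop.LPh.zero}. The only step you gloss over is that for general $F_0\in{\mathcal P}_0L^\infty_{2(\rho-1)}(\Omega)^{2\times2}$ the formal integration by parts needs justification, which the paper supplies by a density argument ($F_0^{(m)}\to F_0$ in $L^1\cap L^2$) verifying that the resulting $v_{\theta,0}$ is indeed a weak solution; this is routine but should be stated.
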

%

%
\begin{remark}\label{rem.zero.mode.gamma}
The restriction $\gamma>2$ in Theorem \ref{thm.main} can already be seen from Proposition \ref{prop.LPh.zero}. The homogeneous part of \eqref{eq1.proof.prop.LPh.zero} has two independent solutions, $r^{-1}$ and $r^{-\gamma+1}$. Since the theorem fixes this leading profile at infinity, the solution $v_{\h,0}$ must belong to the perturbation class of $O(r^{-\rho+1})$. The estimate $r^{-\gamma+1}=O(r^{-\rho+1})$ is equivalent to $\rho\le\gamma$. Hence the choice $\rho>2$ forces $\gamma>2$. The non-zero modes are governed by a different condition and can decay faster under weaker assumptions, but this does not remove the above zero-mode obstruction for the present normalization at infinity.
\end{remark}
%

%
\begin{proofx}{Proposition \ref{prop.LPh.zero}}
To simplify, we omit the subscript ``$\h$" for $v_\h$ and $f_\h$. Put $n=0$ in \eqref{eq.LPh.polar.vr}--\eqref{eq.LPh.polar.spatial.infinity}. The radial part $v_{r,0}(r)$ is identically zero since both $(r v_{r,0})'=0$ and $v_{r,0}(1)=0$ hold by \eqref{eq.LPh.polar.div-free.bdry}. Besides, the angular part $v_{\theta,0}(r)$ solves the ordinary differential equation 
\begin{align}\label{eq1.proof.prop.LPh.zero}
-\frac{\dd^2 v_{\theta,0}}{\dd r^2} 
-\frac{1+\gamma}r \frac{\dd v_{\theta,0}}{\dd r}  
+\frac{1-\gamma}{r^2} v_{\theta,0}
= f_{\theta,0}, \quad r>1, 
\qquad 
v_{\theta,0}(1)=0. 
\end{align}
As the uniqueness of solutions is clear, we may only consider the existence and estimates.

At first we assume that $F_0\in {\mathcal P}_0 C^\infty_0(\Omega)^{2\times2}$, which gives 
\begin{align}\label{eq2.proof.prop.LPh.zero}
f_{\theta,0}
= (\opdiv_\h F)_{\theta,0}
= \frac{1}{r} \frac{\dd}{\dd r} (r F_{r \theta,0}) 
+ \frac{1}{r} F_{\theta r,0}. 
\end{align}
As is shown in \cite[Proof of Proposition 3.1]{Higaki2023}, the solution of \eqref{eq1.proof.prop.LPh.zero} is represented by 
\begin{equation*}
\begin{split}
v_{\theta,0}(r)
&= 
\frac{1}{\gamma-2}
\bigg\{	
-\bigg(
\int_{1}^{\infty} s^2 f_{\theta,0}(s)\dd s 
\bigg) 
r^{-\gamma+1} \\
&\qquad\qquad\quad
+ r^{-\gamma+1} \int_{1}^{r} s^{\gamma} f_{\theta,0}(s)\dd s 
+ r^{-1} \int_{r}^{\infty} s^2 f_{\theta,0}(s)\dd s
\bigg\}. 
\end{split}
\end{equation*}
By integration by parts based on \eqref{eq2.proof.prop.LPh.zero}, we rewrite this formula as 
\begin{equation}\label{rep1.proof.prop.LPh.zero}
\begin{split}
&v_{\theta,0}(r) \\
&=\frac{1}{\gamma-2}
\bigg[
\bigg(
\int_{1}^{\infty} s F_{r \theta,0}(s) \dd s 
- \int_{1}^{\infty} s F_{\theta r,0}(s) \dd s 
\bigg) 
r^{-\gamma+1} \\
&\qquad\qquad\quad
+ r^{-\gamma+1}
\bigg\{
-(\gamma-1) \int_{1}^{r} s^{\gamma-1} F_{r \theta,0}(s) \dd s 
+ \int_{1}^{r} s^{\gamma-1} F_{\theta r,0}(s) \dd s 
\bigg\} \\
&\qquad\qquad\quad
+ r^{-1} 
\bigg(
-\int_{r}^{\infty} s F_{r \theta,0}(s) \dd s 
+ \int_{r}^{\infty} s F_{\theta r,0}(s) \dd s 
\bigg)
\bigg]. 
\end{split}
\end{equation}
An associated pressure ${\mathcal P}_0 q$ is obtained from the equation \eqref{eq.LPh.polar.vr}. Set $v_0(r,\theta)=v_{\theta,0}(r) {\bf e}_\theta$. Then the pair $(v_0, \nabla_\h{\mathcal P}_0 q)$ is smooth and satisfies \eqref{eq.LPh.polar.vr}--\eqref{eq.LPh.polar.spatial.infinity} in the classical sense.

Next we let $F_0\in {\mathcal P}_0 L^\infty_{2(\rho-1)}(\Omega)^{2\times2}$. By direct computation 
\begin{align*}
\begin{split}
\bigg(\int_{1}^{\infty} 
s |F_0(s)| \dd s 
\bigg) 
r^{-\gamma+1} 
\le 
\frac{C}{\rho-2} 
\|F_0\|_{L^\infty_{2(\rho-1)}} 
r^{-\gamma+1} 
\end{split}
\end{align*}
and 
\begin{align*}
\begin{split}
&r^{-\gamma+1} 
\int_1^r 
s^{\gamma-1} |F_0(s)| \dd s 
+ r^{-1} 
\int_{r}^{\infty}
s |F_0(s)| \dd s 
\le 
\frac{C}{\rho-2} 
\|F_0\|_{L^\infty_{2(\rho-1)}} 
r^{-\rho+1}, 
\end{split}
\end{align*}
one can verify that the desired estimate 
\eqref{est1.prop.LPh.zero} follows from the formula \eqref{rep1.proof.prop.LPh.zero}.

Finally, we show that $v_0(r,\theta)=v_{\theta,0}(r) {\bf e}_\theta$ with $v_{\theta,0}(r)$ defined in \eqref{rep1.proof.prop.LPh.zero} is a weak solution of \eqref{eq.LPh} for general $F_0\in {\mathcal P}_0 L^\infty_{2(\rho-1)}(\Omega)^{2\times2}$. By the definition of $v_0$, we have $v_0\in L^2_{\sigma}(\Omega)\cap W^{1,2}_0(\Omega)^2$. Let $\{F^{(m)}_0\}_{m=1}^\infty\subset {\mathcal P}_0 C^\infty_0(\Omega)^{2\times2}$ be such that $\displaystyle{\lim_{m\to\infty} F^{(m)}_0 = F_0}$ in $L^1(\Omega)\cap L^2(\Omega)$. Let $v^{(m)}_0$ be given by \eqref{rep1.proof.prop.LPh.zero} replacing $F_0$ by $F^{(m)}_0$, and let ${\mathcal P}_0 q^{(m)}$ be the associated pressure. Then, using the estimate for $G\in L^1(0,\infty; s\dd s)^{2\times2}$ 
\begin{align*}
r^{-\gamma} 
\int_1^r 
s^{\gamma-1} |G(s)| \dd s 
+ r^{-2} 
\int_{r}^{\infty}
s |G(s)| \dd s 
\le
\bigg(\int_{1}^{\infty} |G(s)| s\dd s\bigg) r^{-2}
\end{align*}
which implies 
\[
\|\nabla_\h (v_0-v^{(m)}_0)\|_{L^2} 
\le 
C(\|F_0-F^{(m)}_0\|_{L^1} + \|F_0-F^{(m)}_0\|_{L^2}),
\]
we have, by integration by parts, 
\begin{align*}
\begin{split}
&\langle \nabla_\h v_0, \nabla_\h \varphi \rangle 
+ \langle V_\h^\bot \oprot_{{\rm 2D}} v_0, \varphi \rangle 
+ \langle F_0, \nabla_\h \varphi \rangle \\
&= 
\lim_{m\to\infty}
\Big(
\langle \nabla_\h v^{(m)}_0, \nabla_\h \varphi \rangle
+ \langle V_\h^\bot \oprot_{{\rm 2D}} v^{(m)}_0, \varphi \rangle
+ \langle F^{(m)}_0, \nabla_\h \varphi \rangle
\Big) \\
&= \lim_{m\to\infty} 
\langle 
-\Delta_\h v^{(m)}_0 
+ V_\h^\bot \oprot_{{\rm 2D}} v^{(m)}_0 
- \opdiv_\h F^{(m)}_0, \varphi \rangle \\
&= \lim_{m\to\infty} 
\langle \nabla_\h {\mathcal P}_0 q^{(m)}, \varphi \rangle = 0, 
\quad 
\varphi\in C^\infty_{0,\sigma}(\Omega). 
\end{split}
\end{align*}
Hence we see that $v_0$ is a weak solution of \eqref{eq.LPh}. This completes the proof. 
\end{proofx}
%

\subsubsection{Non-axisymmetric Part}
\label{subsubsec.nonaxisym.horiz}

Let $n\neq0$. We set 
\begin{align}\label{def.zeta}
n_\gamma = \Big\{n^2 + \Big(\frac{\gamma}{2}\Big)^2\Big\}^{\frac12}, \qquad 
\zeta_n = (n_\gamma^2 + i\alpha n)^{\frac12} 
\end{align}
and 
\begin{align}\label{def.xi}
\begin{split}
\xi_n
&= 
\Re(\zeta_n) 
= 
\frac{n_\gamma}{\sqrt{2}} 
\bigg[\Big\{1 + \Big(\frac{\alpha n}{n_\gamma^2}\Big)^2\Big\}^{\frac12} + 1 \bigg]^{\frac12}. 
\end{split}
\end{align}
Here the square root is chosen so that $\Re(\zeta_n)>0$. By direct computation, 
\begin{align}\label{ineqs.xi}
\xi_n \le |\zeta_n| \le \sqrt{2} \xi_n, 
\qquad 
C_1
\le
\frac{\xi_n}{|n|}
\le 
(|\alpha|^{\frac12} + \gamma)
, 
\qquad 
0<\Big(\xi_n - \frac{\gamma}2\Big)^{-1}
< C_2\frac{\gamma}{|n|}
\end{align}
with $C_1, C_2$ independent of $n$, $\alpha$ and $\gamma$.

%
\begin{proposition}\label{prop.LPh.nonzero}
Let $n\neq0$ and let $\alpha\in\R$, $\gamma>2$ and $2<\rho<3$ with $\rho\le\gamma$. Suppose that $f_\h=f_{\h,n}$ is given by $f_{\h,n}=\opdiv_\h F_n$ for some $F_n\in {\mathcal P}_n L^\infty_{2(\rho-1)}(\Omega)^{2\times2}$. Then there is a unique weak solution $v_{\h,n}\in {\mathcal P}_n L^2_\sigma(\Omega) \cap W^{1,2}_0(\Omega)^2$ of \eqref{eq.LPh} satisfying 
\begin{align}\label{est1.prop.LPh.nonzero}
\|v_{\h,n}\|_{L^\infty_{\rho-1}} 
+ \frac{1}{|n|} \|\nabla_\h v_{\h,n}\|_{L^\infty_{\rho}} 
\le 
\frac{C}{|n|(|n|-\rho+2)} 
\xi_n^2 
\Big(\xi_n - \frac{\gamma}2\Big)^{-1} 
\|F_n\|_{L^\infty_{2(\rho-1)}}. 
\end{align}
The constant $C$ is independent of $n$, $\alpha$, $\gamma$ and $\rho$.
\end{proposition}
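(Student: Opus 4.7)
The plan is to use the vorticity--stream function reformulation. Because $v_{\h,n}$ is divergence-free at the single Fourier mode $n\ne 0$, I introduce a scalar potential $\psi_n(r)$ satisfying $v_{r,n}=(\ii n/r)\psi_n$ and $v_{\theta,n}=-\psi_n'$; then \eqref{eq.LPh.polar.div-free.bdry} is automatic and the no-slip condition becomes $\psi_n(1)=\psi_n'(1)=0$. Taking $\oprot_{{\rm 2D}}$ of \eqref{eq.LPh} eliminates the pressure, and a direct computation with $V_\h^\bot = -(\alpha/r){\bf e}_r - (\gamma/r){\bf e}_\theta$ in polar coordinates yields the scalar Euler-type ODE
\begin{equation*}
-\omega_n'' - \frac{1+\gamma}{r}\omega_n' + \frac{n^2+\ii\alpha n}{r^2}\omega_n = g_n(r), \qquad r>1,
\end{equation*}
where $\omega_n = -L_n\psi_n$ with $L_n = d^2/dr^2 + (1/r)d/dr - n^2/r^2$ and $g_n$ is the mode-$n$ coefficient of $\oprot_{{\rm 2D}}(\opdiv_\h F_n)$.

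I next solve explicitly for $\omega_n$. The characteristic exponents of the homogeneous ODE are $\lambda_\pm = -\gamma/2\pm \zeta_n$ from \eqref{def.zeta}, with Wronskian $2\zeta_n r^{-\gamma-1}$. By \eqref{ineqs.xi} one has $\Re\lambda_+ = \xi_n - \gamma/2 > 0$ and $\Re\lambda_- < -\gamma$, so variation of constants together with decay at infinity yields an explicit integral formula for $\omega_n$. Assuming first $F_n\in{\mathcal P}_n C^\infty_0(\Omega)^{2\times2}$, I integrate by parts twice to re-express the integrals of $g_n$ (which carries two derivatives of $F_n$) as integrals of $F_n$ itself weighted by $r^{\lambda_\pm}$. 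Using $|F_n(r)|\le \|F_n\|_{L^\infty_{2(\rho-1)}}r^{-2(\rho-1)}$ and integrating, one arrives at a pointwise bound of the form $|\omega_n(r)|\le C\xi_n^2(\xi_n-\gamma/2)^{-1}\|F_n\|_{L^\infty_{2(\rho-1)}}\,r^{-\rho-1}$. Here the factor $(\xi_n-\gamma/2)^{-1}$ comes from evaluating $\int s^{\lambda_- + \gamma + 1}\dd s$ against the weight, while $\xi_n^2$ absorbs the coefficient $(n^2+\ii\alpha n)/r^2$ and the boundary terms produced by the two integrations by parts.

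I then recover $\psi_n$ from $-L_n\psi_n = \omega_n$ with $\psi_n(1)=\psi_n'(1)=0$ and decay at infinity, using the fundamental pair $r^{\pm|n|}$ of $L_n$ (Wronskian $-2|n|/r$). A second variation of constants gives $\psi_n$ explicitly; the integral $\int_r^\infty s^{1-|n|}\omega_n(s)\dd s$ produces the factor $1/(|n|-\rho+2)$, which is positive exactly because $\rho<3$ and $|n|\ge1$, while the homogeneous pieces $r^{\pm|n|}$ are adjusted to satisfy the two conditions at $r=1$ at bounded cost. Collecting the pointwise bounds yields $|\psi_n(r)|+ r|\psi_n'(r)|\lesssim \xi_n^2(\xi_n-\gamma/2)^{-1}|n|^{-1}(|n|-\rho+2)^{-1}\|F_n\|_{L^\infty_{2(\rho-1)}}\,r^{-\rho+1}$, which translates into \eqref{est1.prop.LPh.nonzero} via $v_{r,n}=(\ii n/r)\psi_n$ and $v_{\theta,n}=-\psi_n'$. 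To handle general $F_n\in{\mathcal P}_n L^\infty_{2(\rho-1)}(\Omega)^{2\times2}$, I would approximate in $L^1(\Omega)\cap L^2(\Omega)$ by smooth compactly supported tensors exactly as in the proof of Proposition~\ref{prop.LPh.zero}, pass to the limit in the explicit formulas in $L^\infty_{\rho-1}$, and check that the limit belongs to $L^2_\sigma(\Omega)\cap W^{1,2}_0(\Omega)^2$ and solves \eqref{eq.LPh} weakly; uniqueness can be extracted either from the explicit solution formula or from an energy argument using $\gamma>2$ to absorb the linear transport term.

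The main obstacle is the constant bookkeeping. Each of the two integrations by parts in the $\omega_n$ step produces terms weighted by powers of $\zeta_n$ or $|n|$, and one must show that they recombine into precisely the factor $\xi_n^2(\xi_n-\gamma/2)^{-1}/(|n|(|n|-\rho+2))$ stated in \eqref{est1.prop.LPh.nonzero}; any looser estimate, in particular one that replaces $(\xi_n-\gamma/2)^{-1}$ by a generic $O(1/\xi_n)$, would fail to exploit the subtle cancellation coming from $V_\h$ and would break the summability over $n\in\Z$ needed for the contraction argument in Section~\ref{sec.pf} under the smallness assumption on $\sum_n\|F_n\|_{L^\infty_{2(\rho-1)}}$.
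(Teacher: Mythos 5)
Your overall route coincides with the paper's: reduce to the scalar Euler--type ODE for the mode-$n$ vorticity $\omega_n$, solve it by variation of constants with exponents $-\gamma/2\pm\zeta_n$, integrate by parts to trade the derivatives in $\oprot_{{\rm 2D}}\opdiv_\h F_n$ for weights acting on $F_n$ itself, recover the velocity from $\omega_n$ (your stream function $\psi_n$ is exactly the Biot--Savart representation \eqref{rep4.proof.prop.LPh.nonzero}), and finish by a density argument. The weight bookkeeping you describe, including the provenance of $(\xi_n-\gamma/2)^{-1}$ and $(|n|-\rho+2)^{-1}$, matches \eqref{rep3.proof.prop.LPh.nonzero} and the subsequent estimates.

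There is, however, one genuine gap in your accounting of degrees of freedom at the step where you impose the boundary conditions. Decay at infinity does \emph{not} determine $\omega_n$ uniquely: of the two homogeneous solutions $r^{-\gamma/2\pm\zeta_n}$, only $r^{-\gamma/2+\zeta_n}$ is excluded by decay, while $r^{-\gamma/2-\zeta_n}$ decays (its real exponent is $-\gamma/2-\xi_n<0$) and must be kept with a free coefficient. You then need that coefficient: in the $\psi_n$ step the growing mode $r^{|n|}$ is inadmissible because it makes $v_{\theta,n}=-\psi_n'\sim r^{|n|-1}$ fail to vanish at infinity even for $|n|=1$, so only $c_-r^{-|n|}$ survives --- one constant against the two conditions $\psi_n(1)=\psi_n'(1)=0$. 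Your plan to ``adjust the homogeneous pieces $r^{\pm|n|}$'' at $r=1$ therefore fails as written. The resolution, which is precisely \eqref{rep1.proof.prop.LPh.nonzero}--\eqref{rep2.proof.prop.LPh.nonzero} in the paper, is to write $\omega_n=\Phi_n[f_n]+c_n[f_n]r^{-\zeta_n-\gamma/2}$ and choose $c_n[f_n]$ so that the moment condition $\int_1^\infty s^{-|n|+1}\omega_n(s)\dd s=0$ holds, which makes both no-slip conditions hold simultaneously in the Biot--Savart formula; the coefficient $c_n[f_n]$ must then be estimated as well (it contributes the term $\frac{|n|}{\xi_n}|c_n[f_n]|$ in the paper's intermediate bound). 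Once you insert this compatibility condition, the rest of your argument goes through as in the paper.
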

%
%
\begin{proof}
To simplify, we omit the subscript ``$\h$" for $v_\h$ and $f_\h$. As the uniqueness of solutions can be shown as in \cite[Proof of Proposition 3.2]{Higaki2023}, we only prove the existence and estimates.

At first we assume that $F_n\in {\mathcal P}_n C^\infty_0(\Omega)^{2\times2}$, which gives  
\begin{align}\label{eq1.proof.prop.LPh.nonzero}
\begin{split}
f_{r,n} 
&=
(\opdiv_\h F)_{r,n} 
= 
\frac{1}{r} \frac{\dd}{\dd r} (r F_{r r, n}) 
+ \frac{1}{r} 
(in F_{\theta r, n} - F_{\theta \theta, n}), \\
f_{\theta,n} 
&=
(\opdiv_\h F)_{\theta,n}
= 
\frac{1}{r} \frac{\dd}{\dd r} (r F_{r \theta, n}) 
+ \frac{1}{r} 
(in F_{\theta \theta, n} + F_{\theta r, n}). 
\end{split}
\end{align}
Set
\begin{align*}
\omega_n(r) 
= \big((\oprot_{{\rm 2D}} v_n)e^{-in\theta}\big)(r). 
\end{align*}
From \cite[Proof of Proposition 3.2]{Higaki2023}, we see that $\omega_n(r)$ solves 
\begin{align}\label{eq0.proof.prop.LPh.nonzero}
-\frac{\dd^2 \omega_n}{\dd r^2} 
- \frac{1+\gamma}{r} \frac{\dd \omega_n}{\dd r} 
+ \frac{n^2+i\alpha n}{r^2} \omega_n 
= (\oprot_{{\rm 2D}} f_n)_n, 
\quad r>1 
\end{align}
and, recalling $\zeta_n$ in \eqref{def.zeta}, that it can be represented by 
\begin{align}\label{rep1.proof.prop.LPh.nonzero}
\omega_n(r) 
= \Phi_n[f_n](r) + c_n[f_n] r^{-\zeta_n-\frac{\gamma}2}. 
\end{align}
Here
\footnote{Due to a typographical error, the denominator $2\zeta_n$ in $\Phi_n[f_n](r)$ is written as $\zeta_n$ in \cite[(3.19)--(3.20)]{Higaki2023}. It should be emphasized, however, that this error does not affect the main results in \cite{Higaki2023}.}
\begin{align*}
\begin{split}
\Phi_n[f_n](r)
&= 
\frac{r^{-\zeta_n-\frac{\gamma}2}}{2\zeta_n} 
\int_1^r s^{\zeta_n + \frac{\gamma}{2} + 1} (\oprot_{{\rm 2D}} f_n)_n(s) \dd s \\
&\quad
+ \frac{r^{\zeta_n-\frac{\gamma}2}}{2\zeta_n}
\int_r^\infty s^{-\zeta_n + \frac{\gamma}{2} + 1} (\oprot_{{\rm 2D}} f_n)_n(s) \dd s \\
&= 
-\frac{r^{-\zeta_n-\frac{\gamma}2}}{2\zeta_n} 
\int_1^r s^{\zeta_n + \frac{\gamma}2} 
\Big\{
inf_{r,n}(s)
+ \Big(\zeta_n + \frac{\gamma}2\Big) f_{\theta,n}(s) 
\Big\} \dd s \\
&\quad
+ \frac{r^{\zeta_n-\frac{\gamma}2}}{2\zeta_n}
\int_r^\infty s^{-\zeta_n + \frac{\gamma}2} 
\Big\{
-inf_{r,n}(s)
+ \Big(\zeta_n - \frac{\gamma}2\Big) f_{\theta,n}(s) 
\Big\} \dd s 
\end{split}
\end{align*}
and 
\begin{align}\label{rep2.proof.prop.LPh.nonzero}
\begin{split}
c_n[f_n]
&= 
-\Big(\zeta_n + |n| + \frac{\gamma}2 - 2\Big) 
\int_1^\infty s^{-|n|+1} \Phi_n[f_n](s) \dd s. 
\end{split}
\end{align}
By integration by parts based on \eqref{eq1.proof.prop.LPh.nonzero}, we rewrite the formula of $\Phi_n[f_n](r)$ as 
\begin{align}\label{rep3.proof.prop.LPh.nonzero}
\begin{split}
\Phi_n[f_n](r)
&= 
-F_{r \theta, n}(r) 
+ r^{-\zeta_n-\frac{\gamma}2} 
\int_1^r 
s^{\zeta_n + \frac{\gamma}2-1} 
G_{1}(s) \dd s \\
&\quad
+ r^{\zeta_n-\frac{\gamma}2} 
\int_r^\infty 
s^{-\zeta_n + \frac{\gamma}2-1} 
G_{2}(s) \dd s, 
\end{split}
\end{align}
where 
\begin{align*}
\begin{split}
G_{1}(r)
&= 
\frac{in}{2\zeta_n} 
\Big(\zeta_n + \frac{\gamma}2-1\Big)
F_{r r, n}(r)
+ \frac{1}{2\zeta_n} 
\Big(\zeta_n + \frac{\gamma}2\Big)
\Big(\zeta_n + \frac{\gamma}2-1\Big)
F_{r \theta, n}(r) \\
&\quad 
- \frac{1}{2\zeta_n} 
\Big(\zeta_n + \frac{\gamma}2-n^2\Big)
F_{\theta r, n}(r)
- \frac{in}{2\zeta_n} 
\Big(\zeta_n + \frac{\gamma}2-1\Big)
F_{\theta \theta, n}(r), \\
G_{2}(r)
&= 
-\frac{in}{2\zeta_n} 
\Big(\zeta_n - \frac{\gamma}2 + 1\Big)
F_{r r, n}(r)
+ \frac{1}{2\zeta_n} 
\Big(\zeta_n - \frac{\gamma}2\Big)
\Big(\zeta_n - \frac{\gamma}2 + 1\Big)
F_{r \theta, n}(r) \\
&\quad 
+ \frac{1}{2\zeta_n} 
\Big(\zeta_n - \frac{\gamma}2 + n^2\Big)
F_{\theta r, n}(r)
+ \frac{in}{2\zeta_n} 
\Big(\zeta_n - \frac{\gamma}2 + 1\Big)
F_{\theta \theta, n}(r). 
\end{split}
\end{align*}
Then, using $\omega_n(r)$ in \eqref{rep1.proof.prop.LPh.nonzero}, we see that $v_n$ defined by the Biot-Savart law 
\begin{align}\label{rep4.proof.prop.LPh.nonzero}
\begin{split}
v_n(r,\theta)
&=v_{r,n}(r) e^{in\theta} {\bf e}_r + v_{\theta,n}(r) e^{in\theta} {\bf e}_\theta, \\
v_{r,n}(r)
&=\frac{in}{2|n|}
\Big(r^{-|n|-1} \int_{1}^{r} s^{|n|+1} \omega_n(s)\dd s + r^{|n|-1} \int_{r}^{\infty} s^{-|n|+1} \omega_n(s)\dd s
\Big), \\
v_{\theta,n}(r)
&=\frac{1}{2}
\Big(r^{-|n|-1} \int_{1}^{r} s^{|n|+1} \omega_n(s)\dd s - r^{|n|-1} \int_{r}^{\infty} s^{-|n|+1} \omega_n(s)\dd s
\Big) 
\end{split}
\end{align}
is a smooth solution of \eqref{eq.LPh} with some smooth associated pressure ${\mathcal P}_n q$. The reader is referred to \cite[Proof of Proposition 3.2]{Higaki2023} for the details when $f$ is a regular function.

Next we let $F_n\in {\mathcal P}_n L^\infty_{2(\rho-1)}(\Omega)^{2\times2}$. Recalling \eqref{def.xi}--\eqref{ineqs.xi}, we compute 
\begin{align*}
\begin{split}
&\bigg|
r^{-\zeta_n-\frac{\gamma}2} 
\int_1^r 
s^{\zeta_n + \frac{\gamma}2-1} 
G_{1}(s) \dd s
\bigg|
+ 
\bigg|
r^{\zeta_n-\frac{\gamma}2} 
\int_r^\infty 
s^{-\zeta_n + \frac{\gamma}2-1} 
G_{2}(s) \dd s 
\bigg| \\
&\le
C \xi_n
\bigg(
r^{-\xi_n - \frac{\gamma}2} 
\int_1^r 
s^{\xi_n+\frac{\gamma}2-2\rho+1} \dd s
+ 
r^{\xi_n-\frac{\gamma}2} 
\int_r^\infty 
s^{-\xi_n+\frac{\gamma}2-2\rho+1} \dd s
\bigg) \\
&\le 
C \xi_n
\Big(\xi_n-\frac{\gamma}{2}\Big)^{-1} 
\|F_n\|_{L^\infty_{2(\rho-1)}} 
r^{-\rho} 
\end{split}
\end{align*}
to estimate the terms in \eqref{rep1.proof.prop.LPh.nonzero}--\eqref{rep3.proof.prop.LPh.nonzero} as
\begin{align*}
\begin{split}
r^{\rho} |\Phi_n[f_n](r)| 
+ \frac{|n|}{\xi_n} |c_n[f_n]| 
+ \frac{|n|}{\xi_n} r^{\rho} |\omega_n(r)|
\le 
C \xi_n\Big(\xi_n-\frac{\gamma}{2}\Big)^{-1} 
\|F_n\|_{L^\infty_{2(\rho-1)}}. 
\end{split}
\end{align*}
Hence the desired estimate \eqref{est1.prop.LPh.nonzero} is obtained by \eqref{rep4.proof.prop.LPh.nonzero} and easy computation using 
\begin{align*}
\begin{split}
&|v_n(r,\theta)| 
+ \frac{r}{|n|} |\nabla v_n(r,\theta)| \\
&\le 
\frac{C}{|n|} 
\xi_n^2 
\Big(\xi_n - \frac{\gamma}2\Big)^{-1} 
\|F_n\|_{L^\infty_{2(\rho-1)}} \\
&\quad
\times
\bigg(r^{-|n|-1} \int_{1}^{r} s^{|n|-\rho+1} \dd s 
+ r^{|n|-1} \int_{r}^{\infty} s^{-|n|-\rho+1} \dd s
\bigg). 
\end{split}
\end{align*}

It remains to show that $v_n$ defined in \eqref{rep4.proof.prop.LPh.nonzero} is a weak solution of \eqref{eq.LPh} for general $F_n\in {\mathcal P}_n L^\infty_{2(\rho-1)}(\Omega)^{2\times2}$. However, the proof by a density argument using the estimates 
\begin{align*}
r^{-\xi_n-\frac{\gamma}{2}}
\int_{1}^{r} s^{\xi_n+\frac{\gamma}{2}-1} |H(s)| \dd s
&\le
\bigg(
\int_{1}^{\infty} |H(s)| s \dd s 
\bigg) 
r^{-2}, \\
r^{\xi_n-\frac{\gamma}{2}}
\int_{r}^{\infty} s^{-\xi_n+\frac{\gamma}2-1} |H(s)| \dd s 
&\le
\bigg(
\int_{1}^{\infty} |H(s)| s \dd s 
\bigg) 
r^{-2} 
\end{align*}
is similar to the one of Proposition \ref{prop.LPh.zero}. Thus we omit the details. The proof is complete. 
\end{proof}
%

\subsection{Vertical Component}
\label{subsec.vert}

Let $n\in\Z$. Applying ${\mathcal P}_n$ to $v_3$ and $f_3$
\begin{align*}
({\mathcal P}_n v_3)(r,\theta)
= v_{3,n}(r) e^{in\theta}, 
\qquad 
({\mathcal P}_n f_3)(r,\theta)
= f_{3,n}(r) e^{in\theta} 
\end{align*}
and inserting these to \eqref{eq.LPv}, we see that $v_{3,n}(r)$ satisfies 
\begin{align}\label{eq.LPz.polar}
-\frac{\dd^2 v_{3,n}}{\dd r^2} 
- \frac{1+\gamma}{r} \frac{\dd v_{3,n}}{\dd r} 
+ \frac{n^2+i\alpha n}{r^2} v_{3,n} 
= f_{3,n}, 
\quad r>1 
\end{align}
and the boundary conditions 
\begin{align}\label{eq.LPz.polar.BCs}
v_{3,n}(1) = 0, 
\qquad
|v_{3,n}(r)| \to 0, \quad r\to\infty. 
\end{align}
%

\subsubsection{Axisymmetric Part}
\label{subsubsec.axisym.vert}

%
\begin{proposition}\label{prop.LPz.zero}
Let $\alpha\in\R$, $\gamma>2$ and $2<\rho\le\gamma$. 
\begin{enumerate}[(1)]
\item\label{item1.prop.LPz.zero}
Suppose that $f_3={\mathcal P}_0 f_3\in {\mathcal P}_0 L^\infty_{2\rho-1}(\Omega)$. Then there is a unique weak solution ${\mathcal P}_0 v_3\in {\mathcal P}_0 W^{1,2}_0(\Omega)$ of \eqref{eq.LPv} satisfying 
\begin{align}\label{est1.prop.LPz.zero}
\|{\mathcal P}_0 v_3\|_{L^\infty_{\rho-1}} 
+ \frac{1}{\gamma} \|\nabla_\h {\mathcal P}_0 v_3\|_{L^\infty_{\rho}} 
\le 
\frac{C}{\gamma(\rho-2)}
\|{\mathcal P}_0 f_3\|_{L^\infty_{2\rho-1}}. 
\end{align}

\item\label{item2.prop.LPz.zero}
Suppose that $f_3={\mathcal P}_0 f_3$ is given by ${\mathcal P}_0 f_3=\opdiv_\h F_0$ for some $F_0\in {\mathcal P}_0 L^\infty_{2(\rho-1)}(\Omega)^2$. Then there is a unique weak solution ${\mathcal P}_0 v_3\in {\mathcal P}_0 W^{1,2}_0(\Omega)$ of \eqref{eq.LPv} satisfying 
\begin{align}\label{est2.prop.LPz.zero}
\|{\mathcal P}_0 v_3\|_{L^\infty_{\rho-1}} 
+ \frac{1}{\gamma} \|\nabla_\h {\mathcal P}_0 v_3\|_{L^\infty_{\rho}} 
\le 
\frac{C}{\rho-2} \|F_0\|_{L^\infty_{2(\rho-1)}}. 
\end{align}
\end{enumerate}
The constant $C$ is independent of $\alpha$, $\gamma$ and $\rho$.
\end{proposition}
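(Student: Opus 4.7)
The plan is to reduce \eqref{eq.LPv} in the axisymmetric mode to the ordinary differential equation \eqref{eq.LPz.polar} at $n = 0$, which can be written in divergence form as
$$
- (r^{1 + \gamma} v_{3,0}')' = r^{1 + \gamma} f_{3,0}, \quad r > 1, \qquad v_{3,0}(1) = 0, \quad v_{3,0}(r) \to 0\ \text{as}\ r \to \infty,
$$
and to solve it by two explicit integrations. The associated homogeneous equation has the two fundamental solutions $u_1(r) = 1$ and $u_2(r) = r^{-\gamma}$; the decay of $u_2$ (from $\gamma > 0$) together with the boundary condition $v_{3,0}(1) = 0$ makes uniqueness immediate. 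Integrating from $1$ to $r$, then integrating once more and using both boundary conditions to pin down the constants of integration, one arrives at the representation
$$
v_{3,0}(r) = \frac{1 - r^{-\gamma}}{\gamma} \int_r^\infty s f_{3,0}(s) \dd s + \frac{r^{-\gamma}}{\gamma} \int_1^r (s^{1+\gamma} - s) f_{3,0}(s) \dd s.
$$

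For Part (1) I insert $|f_{3,0}(s)| \le \|f_{3,0}\|_{L^\infty_{2\rho - 1}} s^{- 2\rho + 1}$ into this formula and estimate each integral. The first contributes a term of size $r^{- 2\rho + 3}$; the second, after splitting $s^{1 + \gamma} - s$ and integrating the two pieces separately, contributes terms of sizes $r^{- 2\rho + 3}$ and $r^{-\gamma}$. Since $\rho > 2$ and $\gamma \ge \rho$, all three are dominated by $r^{- \rho + 1}$ on $r \ge 1$, yielding \eqref{est1.prop.LPz.zero}. The gradient estimate is obtained analogously by differentiating the representation and estimating the resulting integrals with the same weighted bounds.

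For Part (2) I substitute $f_{3,0} = \opdiv_\h F_0 = \tfrac{1}{r}(r F_{r 3, 0})'$, noting that $F_{\theta 3, 0}$ drops out of the relevant formula of Subsection \ref{subsec.prelim} because $n = 0$. Using the identity $s f_{3, 0}(s) = (s F_{r 3, 0}(s))'$ I integrate by parts in each of the two integrals appearing in the representation above. Several boundary terms at $s = 1$, $s = r$ and $s = \infty$ are produced, and it is central that they cancel in pairs, collapsing the formula to the compact expression
$$
v_{3,0}(r) = - r^{-\gamma} \int_1^r s^{\gamma} F_{r 3, 0}(s) \dd s.
$$
From the bound $|v_{3,0}(r)| \le \|F_0\|_{L^\infty_{2(\rho - 1)}} r^{-\gamma} \int_1^r s^{\gamma - 2\rho + 2} \dd s$ together with $\rho \le \gamma$ and $\rho > 2$, the estimate \eqref{est2.prop.LPz.zero} then follows, and the gradient bound is obtained by direct differentiation.

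The pointwise formulas are initially derived under a smoothness assumption on the datum. To promote them to genuine weak solutions in $\mathcal{P}_0 W^{1,2}_0(\Omega)$ under the $L^\infty$-weight hypotheses of the proposition, I would follow the closing density argument of Proposition \ref{prop.LPh.zero}: approximate $f_{3, 0}$ or $F_0$ by smooth compactly supported mappings, establish $W^{1,2}$-convergence of the associated classical solutions through $L^1 \cap L^2$-type control on derivatives of the representation, and pass to the limit in the distributional formulation of \eqref{eq.LPv}. The main obstacle I expect is the bookkeeping of the integration by parts in Part (2): one has to track the boundary contributions at $s = 1$, $s = r$, and $s = \infty$ precisely enough to verify their exact cancellation and to ensure that the resulting constant is uniform in $\gamma$, as required by \eqref{est2.prop.LPz.zero}.
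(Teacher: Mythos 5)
Your proposal is correct and follows essentially the same route as the paper: the same reduction to the $n=0$ ODE with fundamental solutions $1$ and $r^{-\gamma}$, a representation formula that is algebraically identical to the paper's \eqref{rep1.proof.prop.LPz.zero} after regrouping the integrals, the same integration by parts collapsing Part (2) to $v_{3,0}(r)=-r^{-\gamma}\int_1^r s^{\gamma}F_{r,0}(s)\dd s$, the same weighted integral bound \eqref{est1.proof.prop.LPz.zero}, and the same density argument borrowed from Proposition \ref{prop.LPh.zero}. The only cosmetic imprecision is your description of the middle term as being ``of size $r^{-2\rho+3}$ and $r^{-\gamma}$'' (a logarithm appears when $\gamma=2\rho-3$), but the uniform bound $r^{-\gamma}\int_1^r s^{\gamma-2\rho+2}\dd s\le \frac{C}{\rho-2}r^{-\rho+1}$ covers this and your conclusion stands.
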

%
%
\begin{proof}
We may only consider the existence and estimates of solutions.

(\ref{item1.prop.LPz.zero}) 
Putting $n=0$ in \eqref{eq.LPz.polar}--\eqref{eq.LPz.polar.BCs}, we see that $v_{3,0}(r)$ solves 
\begin{align}\label{eq1.proof.prop.LPz.zero}
-\frac{\dd^2 v_{3,0}}{\dd r^2} 
- \frac{1+\gamma}{r} \frac{\dd v_{3,0}}{\dd r} 
= f_{3,0}, 
\quad r>1, 
\qquad 
v_{3,0}(1) = 0. 
\end{align}
The linearly independent solutions of the homogeneous equation of \eqref{eq1.proof.prop.LPz.zero} are
\begin{align*}
r^{-\gamma} 
\quad {\rm and} \quad 
1,
\end{align*}
and their Wronskian is $\gamma r^{-\gamma-1}$. Hence the solution of \eqref{eq1.proof.prop.LPz.zero} vanishing at infinity is 
\begin{align}\label{rep1.proof.prop.LPz.zero}
\begin{split}
v_{3,0}(r) 
= 
\frac{1}{\gamma}
\bigg\{	
&-\bigg(
\int_{1}^{\infty} s f_{3,0}(s)\dd s 
\bigg) 
r^{-\gamma} \\
&
+ r^{-\gamma} \int_{1}^{r} s^{\gamma+1} f_{3,0}(s)\dd s 
+ \int_{r}^{\infty} s f_{3,0}(s)\dd s
\bigg\}. 
\end{split}
\end{align}
Thus the estimate \eqref{est1.prop.LPz.zero} follows from 
\begin{align}\label{est1.proof.prop.LPz.zero}
\begin{split}
r^{-\gamma} 
\int_{1}^{r} s^{\gamma-2\rho+2} \dd s 
+ \int_{r}^{\infty} s^{-2\rho+2} \dd s
\le 
\frac{C}{\rho-2} 
r^{-\rho+1}. 
\end{split}
\end{align}
One easily checks that $v_{3,0}(r)$ defined in \eqref{rep1.proof.prop.LPz.zero} is a weak solution of \eqref{eq.LPv}.

(\ref{item2.prop.LPz.zero}) 
At first we assume that $F_0\in {\mathcal P}_0 C^\infty_0(\Omega)^2$, which gives 
\begin{align*}
f_{3,0}
= \opdiv_\h F_0
= \frac{1}{r} \frac{\dd}{\dd r} (r F_{r,0}). 
\end{align*}
By integration by parts, we rewrite the formula \eqref{rep1.proof.prop.LPz.zero} as 
\begin{equation}\label{rep2.proof.prop.LPz.zero}
\begin{split}
v_{3,0}(r) 
= -r^{-\gamma} \int_{1}^{r} s^{\gamma} F_{r,0}(s)\dd s. 
\end{split}
\end{equation}

Next we let $F_0\in {\mathcal P}_0 L^\infty_{2(\rho-1)}(\Omega)^2$. The estimate \eqref{est2.prop.LPz.zero} follows from \eqref{est1.proof.prop.LPz.zero}. One can verify that $v_{3,0}(r)$ defined in \eqref{rep2.proof.prop.LPz.zero} is a weak solution of \eqref{eq.LPv} by a density argument similar to the one in the proof of Proposition \ref{prop.LPh.zero}. This completes the proof. 
\end{proof}
%

\subsubsection{Non-axisymmetric Part}
\label{subsubsec.nonaxisym.vert}

Recall that $\xi_n$ is defined in \eqref{def.xi}. 
%
\begin{proposition}\label{prop.LPz.nonzero}
Let $n\neq0$ and let $\alpha\in\R$, $\gamma>2$ and $2<\rho<3$ with $\rho\le\gamma$.

\begin{enumerate}[(1)]
\item\label{item1.prop.LPz.nonzero}
Suppose that $f_3={\mathcal P}_n f_3\in {\mathcal P}_n L^\infty_{2\rho-1}(\Omega)$. Then there is a unique weak solution ${\mathcal P}_n v_3\in {\mathcal P}_n W^{1,2}_0(\Omega)$ of \eqref{eq.LPv} satisfying 
\begin{align}\label{est1.prop.LPz.nonzero}
\|{\mathcal P}_n v_3\|_{L^\infty_{\rho-1}} 
+ \frac{1}{\xi_n} \|\nabla_\h {\mathcal P}_n v_3\|_{L^\infty_{\rho}} 
\le 
\frac{C}{\xi_n} \Big(\xi_n - \frac{\gamma}2\Big)^{-1} 
\|{\mathcal P}_n f_3\|_{L^\infty_{2\rho-1}}. 
\end{align}

\item\label{item2.prop.LPz.nonzero}
Suppose that $f_3={\mathcal P}_n f_3$ is given by ${\mathcal P}_n f_3=\opdiv_\h F_n$ for some $F_n\in {\mathcal P}_n L^\infty_{2(\rho-1)}(\Omega)^2$. Then there is a unique weak solution ${\mathcal P}_n v_3\in {\mathcal P}_n W^{1,2}_0(\Omega)$ of \eqref{eq.LPv} satisfying 
\begin{align}\label{est2.prop.LPz.nonzero}
\|{\mathcal P}_n v_3\|_{L^\infty_{\rho-1}} 
+ \frac{1}{\xi_n} \|\nabla_\h {\mathcal P}_n v_3\|_{L^\infty_{\rho}} 
\le 
C \Big(\xi_n - \frac{\gamma}2\Big)^{-1} 
\|F_n\|_{L^\infty_{2(\rho-1)}}. 
\end{align}
\end{enumerate}
The constant $C$ is independent of $n$, $\alpha$, $\gamma$ and $\rho$.
\end{proposition}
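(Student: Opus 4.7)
The plan is to solve the Euler-type ODE \eqref{eq.LPz.polar} explicitly by variation of parameters with the homogeneous pair $r^{-\gamma/2\pm\zeta_n}$, whose exponents come from the characteristic equation $\beta^2+\gamma\beta-(n^2+i\alpha n)=0$ matching \eqref{def.zeta}. Since $\xi_n=\Re(\zeta_n)>\gamma/2$ by \eqref{ineqs.xi}, the mode $r^{-\gamma/2-\zeta_n}$ decays at infinity while $r^{-\gamma/2+\zeta_n}$ grows, and the Wronskian equals $-2\zeta_n r^{-\gamma-1}$. Choosing integration limits that enforce decay at infinity and fixing the constant $c_n$ by $v_{3,n}(1)=0$ leads to the representation
\begin{equation*}
v_{3,n}(r)=c_n\,r^{-\gamma/2-\zeta_n}+\frac{r^{-\gamma/2+\zeta_n}}{2\zeta_n}\int_r^\infty s^{\gamma/2-\zeta_n+1}f_{3,n}(s)\,\dd s+\frac{r^{-\gamma/2-\zeta_n}}{2\zeta_n}\int_1^r s^{\gamma/2+\zeta_n+1}f_{3,n}(s)\,\dd s.
\end{equation*}
The structure parallels \eqref{rep1.proof.prop.LPh.nonzero} for the 2D vorticity equation but is scalar and thus simpler. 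Uniqueness in $\mathcal P_n W^{1,2}_0(\Omega)$ is immediate because the growing mode has gradient not in $L^2(\Omega)$ (as $\xi_n>\gamma/2$), so any homogeneous solution in this class must vanish identically.

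For part~(\ref{item1.prop.LPz.nonzero}) I would plug $|f_{3,n}(s)|\le\|f_{3,n}\|_{L^\infty_{2\rho-1}}s^{-(2\rho-1)}$ into the three terms and evaluate the elementary integrals. Each converges thanks to $\rho>2$ and $\xi_n>\gamma/2$, producing an overall decay of $r^{-(2\rho-3)}$ which is strictly faster than the target $r^{-(\rho-1)}$. The denominators coming out of the integration are of the form $\xi_n-\gamma/2+(2\rho-3)$ and are controlled from below by $\xi_n-\gamma/2$ since $2\rho-3>0$, yielding the factor $(\xi_n-\gamma/2)^{-1}$; the prefactor $1/(2\zeta_n)$ combined with $|\zeta_n|\ge\xi_n$ from \eqref{ineqs.xi} supplies the additional $1/\xi_n$. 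The gradient bound follows by differentiating the representation in $r$: each derivative of $r^{\pm\zeta_n}$ generates a factor $|\pm\gamma/2\pm\zeta_n|\lesssim\xi_n$, accounting exactly for the normalization $1/\xi_n$ in front of $\|\nabla_\h v_{3,n}\|_{L^\infty_\rho}$.

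For part~(\ref{item2.prop.LPz.nonzero}) the weaker data class $F_n\in L^\infty_{2(\rho-1)}$ does not permit direct estimation of the above formula, so I would substitute $f_{3,n}=\frac{1}{s}(sF_{r,n})'+\frac{in}{s}F_{\theta,n}$ from Subsection~\ref{subsec.prelim}, first for $F_n\in\mathcal P_n C^\infty_0(\Omega)^2$, and integrate by parts in each of the three integrals. Boundary terms at $s=\infty$ vanish because $\xi_n>\gamma/2$ and $\rho>2$; interior differentiations of $s^{\gamma/2\pm\zeta_n+1}$ produce linear combinations of $F_{r,n},F_{\theta,n}$ with coefficients of size $|\zeta_n|\sim\xi_n$. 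These $\xi_n$-factors cancel the $1/\xi_n$ of part~(1), leaving the sharper constant $(\xi_n-\gamma/2)^{-1}$ of \eqref{est2.prop.LPz.nonzero}, while the integrability of the kernels $s^{\pm\zeta_n+\gamma/2}F_n(s)$ is exactly borderline and controlled by the same $(\xi_n-\gamma/2)^{-1}$ mechanism as before. The extension to general $F_n\in\mathcal P_n L^\infty_{2(\rho-1)}(\Omega)^2$ and the verification that the resulting $v_{3,n}$ is a weak solution of \eqref{eq.LPv} follow by the density argument in $L^1(\Omega)\cap L^2(\Omega)$ used at the end of the proof of Proposition~\ref{prop.LPh.zero}.

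The main obstacle is purely technical bookkeeping: tracking which integral contributes the denominator $(\xi_n-\gamma/2)^{-1}$, namely the growing-mode contribution $r^{-\gamma/2+\zeta_n}\int_r^\infty s^{\gamma/2-\zeta_n+1}(\cdot)\,\dd s$ together with the $c_n$-term from the boundary condition, and then verifying that each of the remaining denominators exceeds $\xi_n-\gamma/2$ up to an absolute constant so that the estimate is uniform in $(n,\alpha,\gamma,\rho)$. Every other step is a direct adaptation of the computations carried out in Propositions~\ref{prop.LPh.nonzero} and \ref{prop.LPz.zero}.
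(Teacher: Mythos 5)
Your proposal is correct and follows essentially the same route as the paper: your variation-of-parameters formula with homogeneous pair $r^{-\gamma/2\pm\zeta_n}$ and constant fixed by $v_{3,n}(1)=0$ is exactly the paper's representation \eqref{rep1.proof.prop.LPz.nonzero}, the direct estimation for part~(1) is the paper's bound \eqref{est1.proof.prop.LPz.nonzero}, and the integration by parts for divergence-form data followed by a density argument is precisely how the paper obtains \eqref{rep2.proof.prop.LPz.nonzero} and \eqref{est2.prop.LPz.nonzero}. No substantive differences.
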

%
%
\begin{proof}
We may only consider the existence and estimates of solutions.

(\ref{item1.prop.LPz.nonzero}) 
Since $v_{3,n}(r)$ satisfies \eqref{eq0.proof.prop.LPh.nonzero} with $(\oprot_{{\rm 2D}} f_n)_n$ replaced by $f_{3,n}$, it is given by 
\begin{align}\label{rep1.proof.prop.LPz.nonzero}
\begin{split}
&v_{3,n}(r) \\
&= 
\frac{1}{2\zeta_n}
\bigg\{	
-\bigg(
\int_1^\infty s^{-\zeta_n + \frac{\gamma}{2} + 1} f_{3,n}(s) \dd s 
\bigg) 
r^{-\zeta_n-\frac{\gamma}2} \\
&\qquad\qquad
+ r^{-\zeta_n-\frac{\gamma}2} 
\int_1^r s^{\zeta_n + \frac{\gamma}{2} + 1} f_{3,n}(s) \dd s 
+ r^{\zeta_n-\frac{\gamma}2} 
\int_r^\infty s^{-\zeta_n + \frac{\gamma}{2} + 1} f_{3,n}(s) \dd s 
\bigg\}. 
\end{split}
\end{align}
Thus the estimate \eqref{est1.prop.LPz.nonzero} follows from \eqref{def.xi}--\eqref{ineqs.xi} and 
\begin{align}\label{est1.proof.prop.LPz.nonzero}
\begin{split}
&r^{-\xi_n-\frac{\gamma}2} 
\int_1^r 
s^{\xi_n+\frac{\gamma}2-2\rho+2} 
\dd s
+ r^{\xi_n-\frac{\gamma}2} 
\int_r^\infty 
s^{-\xi_n+\frac{\gamma}2-2\rho+2} 
\dd s \\
&\le 
C \Big(\xi_n - \frac{\gamma}2\Big)^{-1} 
r^{-\rho+1}. 
\end{split}
\end{align}
One easily checks that $v_{3,n}(r) e^{in\theta}$ with $v_{3,n}(r)$ defined in \eqref{rep1.proof.prop.LPz.nonzero} is a weak solution of \eqref{eq.LPv}.

(\ref{item2.prop.LPz.nonzero}) 
At first we assume that $F_n\in {\mathcal P}_n C^\infty_0(\Omega)^2$, which gives
\begin{align*}
f_{3,n}
= 
(\opdiv_\h F)_n
= 
\frac{1}{r} 
\frac{\dd}{\dd r} (r F_{r,n}) 
+ \frac{in}{r} F_{\theta,n}. 
\end{align*}
By integration by parts, we rewrite the formula \eqref{rep1.proof.prop.LPz.nonzero} as 
\begin{equation}\label{rep2.proof.prop.LPz.nonzero}
\begin{split}
v_{3,n}(r)
&=
\frac{1}{2\zeta_n}
\bigg[
-\bigg\{
\int_1^\infty 
s^{-\zeta_n + \frac{\gamma}{2}} 
\bigg(
\Big(\zeta_n - \frac{\gamma}2\Big)
F_{r,n}(s)
+ in F_{\theta,n}(s)
\bigg)
\dd s
\bigg\} 
r^{-\zeta_n-\frac{\gamma}2} \\
&\qquad\qquad\quad
+r^{-\zeta_n-\frac{\gamma}2} 
\int_1^r s^{\zeta_n + \frac{\gamma}{2}} 
\bigg(
-\Big(\zeta_n + \frac{\gamma}2\Big)
F_{r,n}(s)
+ in F_{\theta,n}(s)
\bigg)
\dd s \\
&\qquad\qquad\quad
+ r^{\zeta_n-\frac{\gamma}2} 
\int_r^\infty 
s^{-\zeta_n + \frac{\gamma}{2}} 
\bigg(
\Big(\zeta_n - \frac{\gamma}2\Big)
F_{r,n}(s)
+ in F_{\theta,n}(s)
\bigg)
\dd s
\bigg]. 
\end{split}
\end{equation}

Next we let $F_n\in {\mathcal P}_n L^\infty_{2(\rho-1)}(\Omega)^2$. The estimate \eqref{est2.prop.LPz.nonzero} follows from \eqref{est1.proof.prop.LPz.nonzero}. One can check that $v_{3,n}(r) e^{in\theta}$ with $v_{3,n}(r)$ defined in \eqref{rep2.proof.prop.LPz.nonzero} is a weak solution of \eqref{eq.LPv} by a density argument similar to the one in the proof of Proposition \ref{prop.LPh.zero}. This completes the proof. 
\end{proof}
%

\section{Proof of Theorem \ref{thm.main}}
\label{sec.pf}

We prove Theorem \ref{thm.main} in this section. As the proof is similar to \cite[Proof of Theorem 1.1]{Higaki2023}, we give only the outline to avoid duplication. For $\rho\ge0$, we define the Banach space
\begin{align*}
l^1\big(L^\infty_{\rho}(\Omega)\big)
&= \bigg\{f=\sum_{n\in\Z}\mathcal{P}_n f~\bigg|~ 
\|f\|_{l^1L^\infty_\rho}:=\sum_{n\in\Z} \|\mathcal{P}_n f\|_{L^\infty_\rho}<\infty\bigg\}.
\end{align*}
The following is a corollary of Propositions \ref{prop.LPh.zero}--\ref{prop.LPz.nonzero} and the results in \cite{Higaki2023} for 2D problems. 
%
\begin{corollary}\label{cor.est.l1}
Let $\alpha\in\R$, $\gamma>2$ and $2<\rho<3$ with $\rho\le\gamma$. Suppose that the external force $f$ is a distribution on $\Omega$ given by $f = g + \opdiv F$ where $g\in l^1\big(L^\infty_{2\rho-1}(\Omega)\big)^3$ and $F\in l^1\big(L^\infty_{2(\rho-1)}(\Omega)\big)^{3\times3}$. Then there is a unique weak solution $v=(v_\h,v_3)\in (L^2_\sigma(\Omega)\times L^2(\Omega)) \cap W^{1,2}_0(\Omega)^3$ of \eqref{eq.LPh}--\eqref{eq.LPv} satisfying 
\begin{align}\label{est1.cor.est.l1}
\begin{split}
\|v\|_{l^1 L^\infty_{\rho-1}} 
+ \|\nabla_\h v\|_{l^1 L^\infty_{\rho}} 
\le 
\lambda 
\big(
\|g\|_{l^1 L^\infty_{2\rho-1}}
+ \|F\|_{l^1 L^\infty_{2(\rho-1)}}
\big), 
\end{split}
\end{align}
where $\lambda=\lambda(\alpha,\gamma,\rho)$ is given by 
\begin{align*}
\begin{split}
\lambda
= 
\frac{C_0\gamma^2(|\alpha|^\frac12+\gamma)^2}{(\rho-2)^2(3-\rho)}. 
\end{split}
\end{align*}
The constant $C_0$ is independent of $\alpha$, $\gamma$ and $\rho$.
\end{corollary}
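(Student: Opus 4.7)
The plan is to decompose everything into Fourier modes in the angular variable $\theta$ via the projection $\mathcal{P}_n$, apply the appropriate proposition from Section \ref{sec.lin.prob.} to each mode, and then reassemble. Since $\mathcal{P}_n$ commutes with every operator appearing in \eqref{eq.LP}, and since the splitting into \eqref{eq.LPh}--\eqref{eq.LPv} has already decoupled the horizontal and vertical unknowns, the $n$-th Fourier mode of \eqref{eq.LP} is exactly the setting of Propositions \ref{prop.LPh.zero}--\ref{prop.LPz.nonzero}: the horizontal axisymmetric case $(n=0)$ is Proposition \ref{prop.LPh.zero}, the horizontal non-axisymmetric case $(n\ne 0)$ is Proposition \ref{prop.LPh.nonzero}, and the vertical analogues are Propositions \ref{prop.LPz.zero} and \ref{prop.LPz.nonzero}. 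The contribution of $g$ to the horizontal component, which is not presented in divergence form by those propositions, is handled by the corresponding results of \cite{Higaki2023} invoked in the statement, while its vertical counterpart is covered directly by the first items of Propositions \ref{prop.LPz.zero} and \ref{prop.LPz.nonzero}.

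For each fixed $n$ this produces a unique weak solution $\mathcal{P}_n v$ of \eqref{eq.LPh}--\eqref{eq.LPv} together with a bound of the form
\[
\|\mathcal{P}_n v\|_{L^\infty_{\rho-1}}+\|\mathcal{P}_n\nabla_\h v\|_{L^\infty_\rho}\le\Lambda_n\bigl(\|\mathcal{P}_n g\|_{L^\infty_{2\rho-1}}+\|\mathcal{P}_n F\|_{L^\infty_{2(\rho-1)}}\bigr),
\]
and the core task is to show that $\Lambda_n\le\lambda$ uniformly in $n$. For $n=0$ the prefactors in Propositions \ref{prop.LPh.zero} and \ref{prop.LPz.zero} are already of order $\gamma^2/(\rho-2)$ after transferring the $(\gamma-1)^{-1}$ and $\gamma^{-1}$ weights that sit on their left-hand sides to the right-hand side, and this fits within $\lambda$. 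For $n\ne 0$ I would invoke the three bounds in \eqref{ineqs.xi}, namely $\xi_n\le(|\alpha|^{\frac12}+\gamma)|n|$, $(\xi_n-\gamma/2)^{-1}\le C\gamma/|n|$, and the elementary monotonicity $|n|/(|n|-\rho+2)\le 1/(3-\rho)$ for $|n|\ge 1$; these collapse the prefactor $\xi_n^2(\xi_n-\gamma/2)^{-1}/[|n|(|n|-\rho+2)]$ of Proposition \ref{prop.LPh.nonzero} down to $C\gamma(|\alpha|^{\frac12}+\gamma)^2/(3-\rho)$, and an analogous computation disposes of Proposition \ref{prop.LPz.nonzero}.

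Once $\Lambda_n\le\lambda$ is established for every $n$, summing in $n$ and using the triangle inequality for the $l^1(L^\infty_\rho)$-norm yields \eqref{est1.cor.est.l1}. Uniqueness is inherited mode by mode: applying $\mathcal{P}_n$ to the difference of two candidate solutions in the stated class reduces the question to the uniqueness assertions of the four propositions. Membership of $v$ in $(L^2_\sigma(\Omega)\times L^2(\Omega))\cap W^{1,2}_0(\Omega)^3$ follows from a standard absolute-convergence argument in that space, combined with the mode-wise memberships and the embedding $L^\infty_{\rho-1}(\Omega)\hookrightarrow L^2(\Omega)$ valid for $\rho>2$. The main obstacle is the uniform-in-$n$ bookkeeping in the non-axisymmetric horizontal regime: the raw prefactor in Proposition \ref{prop.LPh.nonzero} grows like $\xi_n^2\sim n^2$, and only the simultaneous cancellation provided by the denominators $|n|$ and $|n|-\rho+2$ together with the factor $(\xi_n-\gamma/2)^{-1}$ renders it uniform. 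Tracking this cancellation with the sharp $\gamma$, $\alpha$, and $\rho$ dependence, and then matching it against the $n=0$ contributions so that everything is absorbed into $\gamma^2(|\alpha|^{\frac12}+\gamma)^2/[(\rho-2)^2(3-\rho)]$, is the delicate part.
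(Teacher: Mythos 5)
Your proposal is correct and follows essentially the same route as the paper: decompose into Fourier modes, apply Propositions \ref{prop.LPh.zero}--\ref{prop.LPz.nonzero} for the divergence-form forcing and the vertical component, invoke the $l^1$ estimate of \cite[Lemma 4.1]{Higaki2023} for the non-divergence-form horizontal forcing $g_\h$, and sum using the uniform-in-$n$ bounds from \eqref{ineqs.xi}. The uniform bookkeeping you identify as the delicate point (in particular $|n|/(|n|-\rho+2)\le 1/(3-\rho)$ and absorbing $(\gamma-1)^2/(\gamma-2)$ via $\gamma-2\ge\rho-2$) is exactly what the paper's terse proof leaves implicit.
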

%
%
\begin{proof}
By \cite[Lemma 4.1]{Higaki2023}, for solutions of the 2D problem
\begin{equation*}
\left\{
\begin{array}{ll}
-\Delta_\h w_\h + (V_\h)^\bot \oprot_{{\rm 2D}} w_\h + \nabla_\h s 
= g_\h&\mbox{in}\ \Omega \\
\opdiv_\h w_\h = 0&\mbox{in}\ \Omega \\
w_\h = 0 &\mbox{on}\ \partial\Omega \\
w_\h(x)\to0&\mbox{as}\ |x|\to\infty, 
\end{array}\right.
\end{equation*}
there is a constant $\tilde{C_0}$ independent of $\alpha$, $\gamma$ and $\rho$ such that 
\begin{align}\label{est1.proof.cor.est.l1}
\begin{split}
\|w_\h\|_{l^1 L^\infty_{\rho-1}} 
+ \|\nabla_\h w_\h\|_{l^1 L^\infty_{\rho}} 
\le 
\frac{\tilde{C_0}\gamma(|\alpha|^\frac12+\gamma)}{(\rho-2)^2(3-\rho)} 
\|g_\h\|_{l^1 L^\infty_{2\rho-1}}. 
\end{split}
\end{align}
Hence the desired estimate \eqref{est1.cor.est.l1} is a consequence of the estimates in Propositions \ref{prop.LPh.zero}--\ref{prop.LPz.nonzero} combined with \eqref{rep.divF.3}, \eqref{def.xi}--\eqref{ineqs.xi} and \eqref{est1.proof.cor.est.l1}. In addition, the existence and uniqueness of solutions follow from Propositions \ref{prop.LPh.zero}--\ref{prop.LPz.nonzero} and \cite[Lemma 4.1]{Higaki2023}. The proof is complete. 
\end{proof}
%

%
\begin{proofx}{Theorem \ref{thm.main}}
We consider the Banach space 
\begin{align*}
{\mathcal X}_\rho
=\Big\{w\in 
W^{1,2}_0(\Omega)^3 \cap l^1\big(L^\infty_{\rho-1}(\Omega)\big)^3 
~\Big|~ 
w_\h\in L^2_\sigma(\Omega), 
\mkern9mu
\nabla_\h w \in l^1\big(L^\infty_{\rho}(\Omega)\big)^{2\times3} \Big\} 
\end{align*}
equipped with the norm $\|w\|_{{\mathcal X}_\rho}:=\|w\|_{l^1 L^\infty_{\rho-1}}+\|\nabla_\h w\|_{l^1 L^\infty_{\rho}}$. By Corollary \ref{cor.est.l1}, for any $w\in {\mathcal X}_\rho$, there is a unique weak solution $v_w$ to \eqref{eq.LPh}--\eqref{eq.LPv} with the external force 
\[
-w\cdot\nabla w + f 
= g + \opdiv (-w\otimes w + F)
\] 
and the solution $v_w$ satisfies 
\begin{align*}
\begin{split}
\|v_w\|_{l^1 L^\infty_{\rho-1}} 
+ \|\nabla_\h v_w\|_{l^1 L^\infty_{\rho}} 
&\le 
\lambda 
\big(
\|g\|_{l^1 L^\infty_{2\rho-1}}
+ \|-w\otimes w + F\|_{l^1 L^\infty_{2(\rho-1)}}
\big) \\
&\le 
\lambda 
\big(
\|w\|_{{\mathcal X}_\rho}^2
+ \|g\|_{l^1 L^\infty_{2\rho-1}}
+ \|F\|_{l^1 L^\infty_{2(\rho-1)}}
\big). 
\end{split}
\end{align*}
We have used the Young inequality for sequences to estimate $-w\otimes w$ in the last line. Hence the mapping ${\mathcal X}_\rho\ni w\mapsto v_w\in {\mathcal X}_\rho$ is well-defined, and will be denoted by $T$.

It is not hard to check that $T$ is a contraction on the closed subset 
\begin{align*}
\mathcal{B}_{\rho}(\delta) 
= \{w\in {\mathcal X}_\rho ~|~ \|w\|_{{\mathcal X}_\rho} \le \delta \}, 
\quad \delta>0 
\end{align*}
if $g,F,\delta$ are small enough depending on $\lambda=\lambda(\alpha,\gamma,\rho)$. Thus the existence of a weak solution of \eqref{intro.eq.NP} in Introduction unique in $\mathcal{B}_{\rho}(\delta)$ follows from the Banach fixed-point theorem.

Let us set $u\in \widehat{W}^{1,2}(\Omega)^3$ by $u = V + v$. Then one can check that $u$ is a weak solution of \eqref{intro.eq.NS} with $b=(\alpha x^{\perp}-\gamma x,0)$ by using \eqref{intro.eq.bilinear.rot}. Moreover, $u$ is unique in the set
\begin{align*}
\{u ~|~ u = V + v, \mkern9mu v \in \mathcal{B}_{\rho}(\delta) \}  
\end{align*}
and satisfies the limit \eqref{est1.thm.main}. This completes the proof of Theorem \ref{thm.main}. 
\end{proofx}
%

\section*{Acknowledgment}

MH was supported by JSPS KAKENHI Grant Numbers JP 25K17278 and 25K00915.

\end{document}